\documentclass[11pt,a4paper]{amsart}
\textwidth=135mm

\usepackage[utf8]{inputenc}
\usepackage[english]{babel}
\usepackage[T1]{fontenc}
\usepackage{url}
\usepackage{mathrsfs}
\usepackage{gincltex}
\usepackage{ifthen}

\usepackage{tikz}
\usetikzlibrary{knots}

%%%%%%%%%%%%%%%%%%%%%%%%%%%%%%%%%%%%%%%%%%%%%
%Some commands for drawing
%For 
\newcounter{for_counter} 
\newcommand{\forplus}[4][1]{% 
	\setcounter{for_counter}{#2}\ifthenelse{\value{for_counter} < #3}{#4% 
		\addtocounter{for_counter}{#1}
		\forplus[#1]{\value{for_counter}}{#3}{#4}}{}}
%One crossing
\newcommand{\cros}[2]{
\draw [line width = 1] (#1, #2) to  [out = 0, in = 180, looseness = 0.7] (1 + #1, 1 +  #2);
\draw [white, double=black, double distance = 1, ultra thick] (#1, 1 + #2) to [out = 0, in = 180, looseness = 0.7] (1 + #1, #2);
}
\newcommand{\crosx}[2]{
\draw [line width = 1] (#1, 1 + #2) to [out = 0, in = 180, looseness = 0.7] (1 + #1, #2);
\draw [white, double=black, double distance = 1, ultra thick] (#1, #2) to  [out = 0, in = 180, looseness = 0.7] (1 + #1, 1 +  #2);
}
%Block of #1 crossings 
\newcommand{\block}[3]{
\forplus{0}{#1}{\cros{#2 + \arabic{for_counter}}{#3}}
}
\newcommand{\blockx}[3]{
\forplus{0}{#1}{\crosx{#2 + \arabic{for_counter}}{#3}}
}
%Potholder-like strip
\newcommand{\strip}[1]{
	\strand[rotate around={#1:(0,0)}, line width = 1] (-2.82843, 1) 
		to [out =  0, in = 180, looseness = 1] ( 2.72843, 1.00)
		to [out =  0, in =   0, looseness = 1] ( 2.72843, 0.75)
		to [out =180, in =   0, looseness = 1] (-2.79473, 0.75)
		to [out =180, in = 180, looseness = 1] (-2.79473, 0.50)
		to [out =  0, in = 180, looseness = 1] ( 2.95804, 0.50);
}
%%%%%%%%%%%%%%%%%%%%%%%%%%%%%%%%%%%%%%%%%%%%%

\title[Meander diagrams of knots and spatial graphs]{Meander diagrams of knots and spatial graphs: proofs of generalized Jablan--Radovi\'{c} conjectures}
\author{Yury Belousov and Andrei Malyutin}
\thanks{The reported study was funded by RFBR according to the research project n.~17-01-00128~A}
\address{%
Yury Belousov\\
%St.~Petersburg State University
Faculty of Mathematics, National Research University HSE
}
\email{bus99@yandex.ru}
\address{%
Andrei Malyutin\\
St.\,Petersburg Department of 
Steklov Institute of Mathematics\\
St.~Petersburg State University
}
\email{malyutin@pdmi.ras.ru}

\newcommand \R       {\mathbb R}
\newcommand \dd      {\partial}

\newcommand \be     {\begin{equation}}
\newcommand \ee     {\end{equation}}

\newtheorem{thm}{Theorem}
\newtheorem{prop}{Proposition}
\newtheorem*{prop*}{Proposition}
\newtheorem{lem}{Lemma}
\newtheorem*{lem*}{Lemma}

\newtheorem{cor}{Corollary}
\newtheorem*{cor*}{Corollary}

\theoremstyle{definition}
\newtheorem*{defn}{Definition}

\theoremstyle{remark}

\begin{document}

\begin{abstract}
We study decomposition into simple arcs (i.\,e., arcs without self-intersections) for diagrams of knots and spatial graphs. 
In this paper, it is proved in particular that if no edge of a finite spatial graph~$G$ is a knotted loop, 
then there exists a plane diagram~$D$ of~$G$ such that
%[[Var 2: In this paper, it is proved that each finite spatial graph~$G$ without looped edges (or, more generally, without knotted loops) has a plane diagram~$D$ such that]]
%has a plane diagram~$D$ such that
(i)~each edge of~$G$ is represented by %[projects to]
a simple arc of~$D$ and 
(ii)~each vertex of~$G$ is represented by a point on the boundary of the convex hull of~$D$. 
%to the boundary of the same connected component of~$\R^2\setminus\DD$.
This generalizes the conjecture of S.~Jablan and~L.~Radovi\'{c} stating that each knot has a meander diagram, i.\,e., a diagram composed of two simple arcs whose common endpoints 
lie on the boundary of the convex hull of the diagram.
Also, we prove another conjecture of Jablan and Radovi\'{c} stating that each 2-bridge knot has a semi-meander minimal diagram, i.\,e., a minimal diagram composed of two simple arcs.
%
%In this paper we investigate questions connected with representation of planar diagrams, links, and tangles in the form of a union of as few simple arcs as possible. 
\end{abstract} 
\maketitle
\section{Introduction}

The present paper refers to the classical theory of knots, links, tangles, and spatial graphs in Euclidean 3-space~$\R^3$ and addresses questions related to decomposition of planar diagrams into simple arcs (i.\,e., arcs without self-intersections).
Questions of this kind have been studied, in particular, in~\cite{Hot60, Oza07, RJ15, BM17, Owa18a, Owa18b, E-ZHLN18}. See also~\cite{Hot08, AST11, Gam13a, Gam13b, Mat15}.
%The present paper considers
%We~work in the smooth category and use standard terminology, which can be found, e.\,g.,~in~\cite{BZ03, Kaw96, Rol76}.
In this paper, we consider a triple of such questions that appeared as a triple of conjectures in~\cite{RJ15}.
These conjectures can be formulated as follows.
\begin{itemize}
\item[(C1)] Each knot has a \emph{semi-meander diagram}, i.\,e., a diagram composed of two simple arcs.
\item[(C2)] Each knot has a \emph{meander diagram}, i.\,e., a diagram composed of two simple arcs whose common endpoints lie on the boundary of the convex hull of the diagram.
\item[(C3)] Each 2-bridge knot has a minimal diagram that is semi-meander.
\end{itemize}

%\subsection{Historical note}
%Although, as a conjectures this results first appear in work~\cite{Ra-Ja}, 
The existence of semi-meander and meander diagrams for each knot 
%(the first and the second Jablan--Radovi\'{c} conjectures) 
(Conjectures~(C1) and~(C2)) 
has been discovered and independently rediscovered several times by distinct methods and in different terms.
%Below, we give a short sketch with a list of papers containing statements that imply the validity of these conjectures. This list does not pretend to be complete. 
%As far as we know, 
Apparently, the first published proof %[of the fact that]
that each knot has a diagram composed of two simple arcs (Conjecture~(C1)) is due to G.~Hotz~\cite{Hot60}, 1960,
who considered the so-called \emph{Arkaden-Faden-Lagen representations} (arcade-string-configurations) for knots.
A~knot in its Arkaden-Faden-Lagen representation has a semi-meander projection.
%An Arkaden-Faden-Lagen representation of a knot yields a semi-meander diagram.
In~\cite{Hot08}, Hotz mentioned that 
these representations had been suggested to him by Kurt Reidemeister and ``are based on a remark of K.\,F.~Gauss~\cite{Gauss},
that each knot has projections on the plane which can be decomposed in two simple strings, this means strings without doppel points''.
In 1989, G.\,S.~Makanin~\cite{Mak89} obtained a series of results in braid theory implying, in particular, that each knot has a diagram in the form of a closed braid composed of two simple arcs. 
This result of Makanin was rediscovered by J.\,A.~Kneissler~\cite{Kne99} in 1999. 
In 2007, an elegant idea of a much shorter proof for the statement of Conjecture~(C1) has appeared in the preprint~\cite{Oza07} of M.~Ozawa.
A~variation of Ozawa's method is described in detail in~\cite{AST11}.
In addition to the above, the existence of a semi-meander diagram for each knot becomes an obvious corollary of a much more general result about spatial graphs due to R.~Shinjo~\cite{Shi05}, 2005, if we treat a knot as a spatial graph with two vertices and two edges (below, this is explained in more detail).
The existence of meander diagrams for each knot 
%(the second Jablan--Radovi\'{c} conjecture) 
(Conjecture~(C2)) has been proved by C.~Adams, R.~Shinjo, and K.~Tanaka (see Sec.~2 and Fig.~2 in~\cite{AST11}).
%The second conjecture was firstly prooved (in other terms) in the paper~\cite{AST11}.
The paper~\cite{Owa18a} contains another proof (which admits a simplification).
Also, Conjecture~(C2) follows from results of S.~Kinoshita~\cite{Kin87} about $\theta_n$-curves combined with methods of~\cite{LJ01} directly extended to the case of tangles.
In addition to the above, much stronger results are obtained in~\cite{E-ZHLN18}, where it is proved that each knot has so-called \emph{potholder diagrams}, which are a very special case of meander diagrams.

In summary, Conjecture~(C1) is known to be true and has a natural generalization to the case of spatial graphs (this generalization is a corollary of a result due to R.~Shinjo; see~\cite{Shi05} and Theorem~\ref{th:Shinjo} below); Conjecture~(C2) is known to be true and has an impressive strengthening in terms of potholder diagrams (see \cite{E-ZHLN18}); as far as we know, Conjecture~(C3) remained open until recently.

Below, we prove Conjecture~(C3), generalize Conjecture~(C2) to the case of spatial graphs (Theorem~\ref{th:meander-SpGr-4}), and discuss further strengthening of this generalization in terms of potholder-like diagrams. 
%In order to state results involving spatial graphs, we give related definitions.
In order to formulate Theorem~\ref{th:meander-SpGr-4}, we give definitions related to the concept of spatial graphs.
%which extends Shinjo's generalization of~(C1) to~(C2)

\subsection*{Spatial graphs}
%Before We recall definitions related to the concept of spatial graphs.
%{\bf Spatial graphs.}
By a \emph{graph} we mean a 1-dimensional CW~complex:
$0$-cells are its \emph{vertices} and $1$-cells are \emph{edges}.
%In a graph, $0$-cells are called \emph{vertices} and~$1$-cells are called \emph{edges}.
%$0$-cells are graph's \emph{vertices} and~$1$-cells are its \emph{edges}.
%The \emph{valence} of a vertex $v$ of a graph~$G$ is the number of connected components of the space $U_v\setminus\{v\}$, where $U_v$ is a connected and simply connected neighborhood of~$v$.
%
A~\emph{spatial graph} is a subset of~$\R^3$ that is (i)~ambient isotopic to the union of a finite number of straight line segments and (ii)~endowed with the structure of a finite graph, i.\,e., of a finite 1-dimensional CW~complex.
(Note that the same subset of~$\R^3$ can bear distinct graph structures, due to the vertices of valence two.)
A~\emph{loop} in a (spatial) graph is an edge whose closure is a circle. 
%A~\emph{loop} in a graph is an edge whose ends attached to the same vertex.
A~\emph{knotted loop} in a spatial graph is an edge whose closure is a non-trivial knot.
%[Var 2: An edge of a spatial graph is a~\emph{knotted loop} if its closure is a non-trivial knot.]
Two spatial graphs are said to be \emph{equivalent} if they are related by an ambient isotopy preserving the graph structure.
%We consider spatial graphs up to equivalence.
%We consider spatial graphs up to homeomorphisms of pairs $(\R^3,G)$ preserving the graph structure.
%Following common abuse of language, a spatial graph will be a pair $(\R^3,G)$, a subset $G$ itself 
%(endowed with the graph structure), or a class of homeomorphic pairs (cf. [Burde and Zieschang, 2003, p. 1]).

A projection~$p$ of a spatial graph~$G$ onto a plane~$E$ in~$\R^3$ is said to be \emph{regular} if:
(i)~for each point~$x$ in~$E$, at most two points of~$G$ project to~$x$;
(ii)~the number of points $x\in E$ such that %precisely 
two points of~$G$ project to~$x$ is at most finite;
such points~$x$ are called \emph{double points} or \emph{crossings};
(iii)~no vertex of~$G$ projects to a crossing; 
(iv)~each crossing~$x\in E$ has a neighborhood $U_x\subset E$ such that the set $p^{-1}(U_x)\cap G$ consists of two straight line intervals. 
A~\emph{diagram} of a spatial graph~$G$ is the plane image of a regular projection of a spatial graph~$G'$ equivalent to~$G$ with additional information of under- and over-crossings in all double points
%specifying which strand is above the other 
and with a set of marked points that is the image of the set of vertices of~$G'$.
The marked points in a spatial graph diagram are called the \emph{vertices} of the diagram.
(This marking is not redundant because of vertices of valence two.)
The images of the graph edges will be called the \emph{principal arcs} (or \emph{principal curves}) of the diagram.

We say that a diagram~$D$ of a spatial graph~$G$ is \emph{semi-meander} if all of the principle arcs of~$D$ are simple except those representing edges that are knotted loops, and each of these \emph{exceptional} principal arcs is composed of two simple subarcs. (See examples in Fig.~\ref{fig:sp_graph}(b)--(d).)
We say that a semi-meander diagram~$D$ is \emph{meander} if 
%(i)~$D$ is semi-meander,
(i)~all of the vertices of~$D$ lie on the boundary of the convex hull of~$D$ and (ii)~each exceptional principal arc of~$D$ is cut into two simple subarcs by a point lying on the boundary of the convex hull of~$D$. (Fig.~\ref{fig:sp_graph}(d) provides an example.) 

In the given terminology, we have the following generalization of Conjecture~(C2).

\begin{thm}
\label{th:meander-SpGr-4}
Each spatial graph has a meander diagram.
\end{thm}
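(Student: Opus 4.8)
The plan is to deduce the theorem from the existence of \emph{semi-meander} diagrams for spatial graphs (a corollary of Theorem~\ref{th:Shinjo}) by ``rounding out'' such a diagram. Fix a semi-meander diagram $D$ of~$G$ and mark on each exceptional principal arc the point at which it splits into two simple subarcs; call these the \emph{auxiliary vertices}, and let $V$ be the finite set of all vertices of~$D$ together with all auxiliary vertices, so that $D\sm V$ is a disjoint union of open simple arcs, the \emph{strands}. It suffices to transform~$D$, by operations preserving both the equivalence class of~$G$ and the semi-meander structure, into a diagram $D'$ for which there is an embedded closed disk $\Delta\subset\R^2$ with $D'\subset\Delta$, $D'\sm V'\subset\inr\Delta$, and $D'\cap\dd\Delta=V'$, where $V'$ is the node set of~$D'$. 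Indeed, by the Schoenflies theorem there is an orientation-preserving homeomorphism~$h$ of~$\R^2$ carrying $\Delta$ onto the closed unit disk and $\dd\Delta$ onto the unit circle; then $h(D')$ is again a semi-meander diagram of~$G$, all of whose nodes lie on the unit circle. Since $h(D')$ is contained in the closed unit disk, so is its convex hull, and every point of $h(D')$ on the unit circle --- being an extreme point of the closed unit disk --- is an extreme point of the convex hull of $h(D')$ and so lies on its boundary; hence every vertex and every auxiliary vertex of $h(D')$ lies on the boundary of the convex hull of $h(D')$, i.e.\ $h(D')$ is a meander diagram.

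I would construct~$D'$ in two stages, the basic move being a \emph{node slide}: an isotopy of~$G$ dragging a node~$v$ along a simple path~$\gamma$ in~$\R^2$, carrying the edge-ends at~$v$ as a thin bundle parallel to~$\gamma$ and pushing them over or under every strand that~$\gamma$ meets. Such a slide changes neither the type of~$G$ nor the simplicity of any principal arc \emph{provided} the trailing bundle never crosses the old part of a strand incident to~$v$. In the first stage I would fix a spanning forest~$T$ of~$G$ made up of non-loop edges, so that each edge of~$T$ is a single strand of~$D$, slide the nodes along the edges of~$T$ while ``reeling them in'', and thereby gather the vertices of each connected component of~$G$ into a small region; up to a small perturbation this turns~$D$ into a \emph{bouquet-type} diagram in which, near one super-vertex per component, a tiny planar picture of the corresponding tree sits and all non-tree edges and loops emanate. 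In the second stage I would slide each super-vertex out to the unbounded face and draw the tiny trees so that their vertices too lie on the boundary of that face; this produces the disk~$\Delta$. A knotted loop is handled by a local model: by the Adams--Shinjo--Tanaka theorem (see~\cite{AST11}) its underlying knot has a meander diagram, and squeezing such a diagram into a thin lens whose two extreme points are placed at the relevant node and auxiliary vertex, attached to~$\dd\Delta$ along a short arc, realizes the required pair of simple subarcs separated by a boundary point.

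The step I expect to be the main obstacle is controlling the node slides so that no principal arc is ever forced to cross itself, together with ensuring that the iteration terminates. This is genuinely delicate: two edges incident to the same node may cross each other in~$D$, and such a crossing is precisely what obstructs a naive slide. I would deal with it by first performing finger moves that isolate a neighbourhood of the chosen slide path from the remaining edge-ends at the node, by always sliding along a suitably ``outermost'' strand at the node, and by running the procedure against a well-founded complexity, for instance the lexicographic pair consisting of the number of nodes not yet on~$\dd\Delta$ and the number of crossings lying on strands joining two such nodes. Granting this control, the remaining ingredients --- the Schoenflies reduction and the insertion of the local knotted-loop models --- are routine.
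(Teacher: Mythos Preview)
Your overall strategy --- start from a semi-meander diagram and push the nodes, one at a time, out to the boundary of a disk --- matches the paper's. But the proposal stops short of the two points that actually carry the argument, and it contains one step that is not correct as written.

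\medskip
\textbf{The central gap.} You identify correctly that the whole difficulty is to slide a node~$v$ to the boundary while keeping every principal arc simple, and you defer this to ``finger moves'', ``outermost'' choices, and a lexicographic complexity. None of that is needed, and none of it is shown to work. The paper's observation is much sharper: pick any interior node~$v_0$, a small star-neighbourhood $B$ with legs $\alpha_1,\dots,\alpha_m$ ending at $p_1,\dots,p_m$, and a target point~$v\in\partial\mathbb{B}^2$. For each incident principal arc~$A_i$, the set $A_i\setminus\alpha_i$ is a \emph{simple} arc meeting $\partial\mathbb{B}^2$ in at most one point (distinct from~$v$); hence its complement in $\mathbb{B}^2$ is path-connected, and a simple arc $\beta_i$ from $p_i$ to~$v$ avoiding $A_i\setminus\alpha_i$ exists. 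If $p_i,p_j$ lie on the same (looped) principal arc one can additionally arrange $\beta_i\cap\beta_j=\{v\}$. This single topological remark replaces your entire Stage~1/Stage~2 machinery; no spanning forest, no ``reeling in'', no super-vertices.

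\medskip
\textbf{The equivalence step.} Your sentence ``such a slide changes neither the type of~$G$ \dots'' is asserted, not proved. Pushing the $\beta$'s ``over or under'' the rest of~$D$ is not by itself enough: the $\beta$'s may cross one another, and an arbitrary choice of over/under among them can change the spatial-graph type. The paper fixes this by declaring all $\beta$'s over~$D'$ and, among themselves, $\beta_j$ over $\beta_i$ for $i<j$; it then proves equivalence by interpreting both the $\alpha$-star and the $\beta$-star as \emph{monotone} $m$-star tangle-graphs, invoking a Denne--Sullivan--type lemma that monotone stars are unknotted and hence strongly TG-equivalent. This is the genuine content you are missing.

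\medskip
\textbf{The knotted-loop step is wrong as stated.} You cannot ``handle a knotted loop by a local model'': replacing the loop by an abstract meander diagram of its knot type ignores how the loop is linked with the rest of the spatial graph and will in general change the isotopy class of~$G$. The correct reduction --- which your own definition of~$V$ already performs --- is to add a valence-two vertex on each knotted loop; this turns it into two non-loop edges and reduces the problem to spatial graphs without knotted loops, after which the single vertex-pushing procedure above applies uniformly. Drop the ``local model'' paragraph entirely.
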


Conjecture~(C2) readily follows from Theorem~\ref{th:meander-SpGr-4} when we convert a knot into a spatial graph with two vertices and two edges.

The paper is organized as follows. 
In Section~\ref{sec:proof2}, we prove Theorem~\ref{th:meander-SpGr-4}.
Section~\ref{sec:remarks} contains several remarks and comments: about algorithmic construction of meander and semi-meander diagrams; about an alternative way of proving Theorem~\ref{th:meander-SpGr-4}; about composing of knots and spatial graphs from plane arcs.
In Section~\ref{sec:potholder}, we discuss generalized potholder diagrams for spatial graphs and a corresponding strengthening of Theorem~\ref{th:meander-SpGr-4}.
In Section~\ref{sec:proof3}, we prove Conjecture~(C3).

\section{Proof of Theorem~\ref{th:meander-SpGr-4}}
\label{sec:proof2}

In~\cite{Shi05}, R.~Shinjo proved the following theorem. 

\begin{thm}[Shinjo~\cite{Shi05}]
\label{th:Shinjo}
Let $G$ be a spatial graph and let $H_1,\dots, H_n$ be spatial subgraphs of~$G$.
Suppose that the intersection $H_i\cap H_j$ is contained in the set of vertices of~$G$ whenever $i\neq j$. Suppose that $D_1,\dots, D_n$ are diagrams of $H_1,\dots, H_n$. 
Then there exists a diagram~$D$ of~$G$ whose restrictions to $H_1,\dots, H_n$ are isotopic to $D_1,\dots, D_n$, respectively.
\end{thm}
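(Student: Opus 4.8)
The plan is to realize the prescribed diagrams one subgraph at a time, exploiting the fact that a crossing between edges of two \emph{different} subgraphs is recorded in neither restriction. All interference between subgraphs can therefore be confined to such ``free'' crossings and absorbed into the ambient isotopy type of~$G$ itself.

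First I would reformulate the goal: it suffices to find an ambient isotopy of~$\R^3$ after which the vertical projection, restricted to each~$H_i$, coincides up to planar isotopy with~$D_i$. Indeed, any ambient isotopy preserves the equivalence class of~$G$, so the resulting plane image, after a small generic perturbation that keeps each restriction isotopic to~$D_i$, is a diagram of~$G$ with the required restrictions; in particular the edges of~$G$ lying in no~$H_i$ are simply carried along and cause no difficulty. Next I would reduce to the case $n=2$ by induction on~$n$: assuming the statement for $n-1$ subgraphs, I first combine $H_1,\dots,H_{n-1}$ into a single diagram~$D'$ of $H'=H_1\cup\dots\cup H_{n-1}$, which is legitimate because $H'\cap H_n$ lies in the vertex set (each $H_i\cap H_n$ does), and then apply the case $n=2$ to the pair $H'$, $H_n$ equipped with $D'$, $D_n$.

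For the case $n=2$, write $G=A\cup B$ with $A\cap B=W$ a set of common vertices, and let $D_A$, $D_B$ be the given diagrams. I would proceed in two stages. In the first stage, choose an ambient isotopy of~$\R^3$ carrying~$A$ to a position~$A'$ whose projection is~$D_A$ (this exists because $A$ is a spatial graph with diagram~$D_A$), and let $B'$ be the image of~$B$; the common vertices~$W$ now sit at prescribed points and $B'$ projects to some diagram~$D_B^{\ast}$. In the second stage I would keep~$A'$ fixed and move~$B'$, within the complement of~$A'$ and rel~$W$, so as to turn its projection into~$D_B$; since the whole motion is an ambient isotopy of~$G$, the class of~$G$ is preserved throughout and the restriction to~$A$ stays equal to~$D_A$, so the final projection is the desired diagram~$D$.

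The hard part is exactly this second stage. The diagrams $D_B^{\ast}$ and~$D_B$ are related by a finite sequence of Reidemeister moves and planar isotopies of the spatial graph~$B$, and I would like to realize each move by an ambient isotopy that fixes~$A'$. The obstruction is that a move localized where~$A'$ is present may call for sliding a strand of~$B$ across~$A'$, which is forbidden: carrying it to the other side would force the strand through~$A'$ and change the isotopy type. The key point to establish is that this interference can always be rerouted through the free $A$--$B$ crossings, replacing a blocked passage by a finger move that pushes the strand \emph{around}~$A'$ at the cost of a pair of new $A$--$B$ crossings invisible to both restrictions. Making this precise amounts to a general-position argument for the trace of~$B$ in the spacetime $\R^3\times[0,1]$ relative to the fixed two-complex $A'\times[0,1]$, showing that every isolated intersection can be cancelled at the expense of free crossings while $A'$ and the self-crossing pattern~$D_B$ of~$B$ are left untouched. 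Equivalently, one shows that the relative isotopy class of~$B$ in the complement of~$A'$ prescribed by~$G$ is realizable with~$B$ still projecting to~$D_B$, because altering the $A$--$B$ crossings suffices to pass between all relative classes compatible with the fixed restrictions. This realizability statement is the heart of the theorem; once it is in place, the two stages combine to produce~$D$.
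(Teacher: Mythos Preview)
The paper does not supply a proof of this theorem; it is quoted from Shinjo~\cite{Shi05}, and the only hint the paper offers about the method is the remark (in the discussion of alternative approaches) that Shinjo extends the technique of Lee--Jin~\cite{LJ01}, whose ``arguments \dots\ deal with the Reidemeister moves for subdiagrams.'' So there is no in-paper argument to match against line by line; I compare your outline to that description and assess whether it completes.

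Your reduction to $n=2$ is correct, and the first stage (placing~$A$ so that it projects to~$D_A$) is unproblematic. The obstacle you locate in the second stage, however, is largely a phantom, and the remedy you propose would actually damage the argument. You worry that realizing a Reidemeister move on~$B$ near~$A'$ may ``force the strand through~$A'$''. It does not. A Reidemeister move on~$D|_B$ is a local planar modification of the $B$-arcs in a small disk; to lift it to the full diagram~$D$ one performs the same planar motion of those $B$-arcs while the $A$-arcs stay put, and wherever a moving $B$-arc sweeps across an $A$-arc one records a new $A$--$B$ crossing with a freely chosen sign (e.g.\ $B$ always over~$A$). In~$\R^3$ this is simply the isotopy that carries the $B$-strand above the $A$-strand: nothing passes through~$A'$, the class of~$G$ is preserved, and neither restriction changes since the new crossings are mixed. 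That observation is the whole content of the Lee--Jin method the paper alludes to: every Reidemeister move on a subdiagram lifts to the ambient diagram at the cost of extra mixed crossings, so one runs through a sequence taking $D|_{H_i}$ to~$D_i$ for each~$i$ in turn; later adjustments of~$H_j$ create only $H_j$--$H_i$ crossings and hence never disturb an already fixed~$D|_{H_i}$.

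By contrast, your finger move---pushing a $B$-strand around~$A'$ in~$\R^3$---is both unnecessary and risky: the finger appears in the projection of~$B$, and unless you are careful it introduces new $B$--$B$ crossings, so it need not preserve~$D|_B$. And the spacetime claim you end with (that varying the $A$--$B$ crossing data realizes every relative isotopy class of~$B$ in the complement of~$A'$) is left as an unproved assertion; with the direct diagrammatic lift above, it is not needed.
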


Obviously, Theorem~\ref{th:Shinjo} implies that each spatial graph without knotted loops %(hence each spatial graph) 
has a semi-meander diagram.
Then it follows that each spatial graph has a semi-meander diagram (in order to see this, we create a new vertex of valence two on each edge that is a knotted loop).
Moreover, we observe that such ``adding of a vertex'' to each edge that is a knotted loop reduces the problem of finding a meander diagram to the case of spatial graphs without knotted loops. 
%Clearly, the problem reduces to the case of a spatial graph without knotted loops. 
Therefore, in order to prove Theorem~\ref{th:meander-SpGr-4} it is enough to invent a universal procedure that transforms a semi-meander but not meander diagram~$D'$ of a spatial graph without knotted loops 
to a semi-meander diagram~$D''$ representing the same spatial graph as~$D'$
and such that the boundary of the convex hull of~$D''$ contains more diagram's vertices as compared to~$D'$.
%and having more points representing vertices on the boundary of the convex hull as compared to~$D'$.
The theorem will then follow by induction.

We now pass to a description of such a procedure. 
%Clearly, the problem reduces to the case of a spatial graph without knotted loops. 
Let $G$ be a spatial graph without knotted loops and let $D'$ be a semi-meander but not meander diagram of~$G$.
We denote by~$V$ the set of all vertices of~$D'$ and let~$V_{ext}$ be
the subset in~$V$ formed by all those vertices that lie on the boundary of the convex hull of~$D'$.
Since by assumption $D'$ is not meander, it follows that
%Then 
$V$ has a vertex~$v_0$ that is not in~$V_{ext}$.
Obviously, without loss of generality we can assume that $D'$ is contained in a Euclidean disk~$\mathbb{B}^2$ such that $\dd \mathbb{B}^2\cap D'=V_{ext}$.
Take a point $v\in \dd \mathbb{B}^2\setminus D'$.
%Roughly speaking, 
Our plan is to transform~$D'$ by relocating~$v_0$ to the position of~$v$.
(The transitions (b)$\to$(c) and (c)$\to$(d) in Fig.~\ref{fig:sp_graph} are examples of such transformations.)
%[[Without loss of generality,...]]

%\newpage

\begin{figure}
\scalebox{0.4}{
\begin{tikzpicture}
%First diag starts
\node (v1) at (1,  0){};
\node (v2) at (9,  0){};

\node (p1) at (3.5,  3){};
\node (p2) at (6.5,  1){};
\node (p3) at (5, -1){};
\node (p4) at (3.5,  1){};
\node (p5) at (6.5,  3){};

\node (q1) at (3.5, -3){};
\node (q2) at (6.5, -1){};
\node (q3) at (5,  1){};
\node (q4) at (3.5, -1){};
\node (q5) at (6.5, -3){};

\node (c)  at (5,  0){};
\node (name) at (5, -5) {{\huge(a)}};

\begin{knot}[consider self intersections, end tolerance=1pt, clip width = 5pt]%, draft mode = crossings]
\strand[ultra thick, red] (v1.center) 
	to [out =  45, in = 180, looseness = 1] (p1.center)
	to [out =   0, in =  90, looseness = 1] (p2.center)
	to [out = -90, in =   0, looseness = 1] (p3.center)
	to [out = 180, in = -90, looseness = 1] (p4.center)
	to [out =  90, in = 180, looseness = 1] (p5.center)
	to [out =   0, in = 135, looseness = 1] (v2.center);
	
\strand[ultra thick, blue] (v1.center) 
	to [out = -45, in = 180, looseness = 1] (q1.center)
	to [out =   0, in = -90, looseness = 1] (q2.center)
	to [out =  90, in =   0, looseness = 1] (q3.center)
	to [out = 180, in =  90, looseness = 1] (q4.center)
	to [out = -90, in = 180, looseness = 1] (q5.center)
	to [out =   0, in =-135, looseness = 1] (v2.center);
	
\strand[ultra thick] (v1.center) 
	to [out =   0, in = -100, looseness = 1.5] (c.center)
	to [out =  75, in =  180, looseness = 1.5] (v2.center);

\flipcrossings{3, 5}
\end{knot}

\draw[fill] (v1) circle (0.2);
\draw[fill] (v2) circle (0.2);

\draw[fill, white] (11.5, 0) circle (0.2);
\draw[fill, white] (-1.5, 0) circle (0.2);

\draw[->, line width = 3pt, white] (6.5, -5) to (6.5, -7);

%Firs diag ends
\end{tikzpicture}
\raisebox{7.4cm}{
\begin{tikzpicture}
\draw[->, line width = 3pt] (0, 0) to (2, 0);
\end{tikzpicture}}
\begin{tikzpicture}
%Second diag starts
\node (v1) at (1,  0){};
\node (v2) at (9,  0){};

\node (l) at (-0.5,  -0.5){};
\node (r) at ( 10.5,  0.5){};

\node (p1) at (3.5,  3){};
\node (p2) at (6.5,  1){};
\node (p3) at (5,   -1){};
\node (p4) at (3.5,  1){};
\node (p5) at (6.5,  3){};

\node (q1) at (3.5, -3){};
\node (q2) at (6.5, -1){};
\node (q3) at (5,    1){};
\node (q4) at (3.5, -1){};
\node (q5) at (6.5, -3){};

\node (c)  at (5,  0){};
\node (name) at (5, -5) {{\huge(b)}};

\begin{knot}[consider self intersections, end tolerance=1pt, clip width = 5pt]%, draft mode = crossings]
\strand[ultra thick, red] (v1.center) 
	to [out =  45, in = 180, looseness = 1] (p1.center)
	to [out =   0, in =  90, looseness = 1] (p2.center)
	to [out = -90, in =   0, looseness = 1] (p3.center)
	to [out = 180, in = -90, looseness = 1] (p4.center)
	to [out =  90, in =    0, looseness = 1] (3.5,  2)
	to [out =  180, in = 45, looseness = 1] (1, -1)
	to [out =  225, in = 225, looseness = 1] (0, 0)
	to [out =  45, in =  180, looseness = 1] (3.5,  4)
	%to [out =  90, in =  180, looseness = 1] (p5.center)
	to [out =   0, in =  135, looseness = 1] (v2.center);
	
\strand[ultra thick, blue] (v1.center) 
	%to [out = -45, in = 180, looseness = 1] (q1.center)
	to [out =  -45, in =  180, looseness = 1] (6.5,  -4)
	to [out =    0, in = -135, looseness = 1] (10, 0)
	to [out =   45, in =   45, looseness = 1] (9, 1)
	to [out = -135, in =    0, looseness = 1] (6.5,  -2)
	to [out =  180, in =  -90, looseness = 1] (q2.center)
	to [out =   90, in =    0, looseness = 1] (q3.center)
	to [out =  180, in =   90, looseness = 1] (q4.center)
	to [out =  -90, in =  180, looseness = 1] (q5.center)
	to [out =    0, in = -135, looseness = 1] (v2.center);
	
\strand[ultra thick] (v1.center) 
	to [out =   0, in =-100, looseness = 1.5] (c.center)
	to [out =  75, in =  180, looseness = 1.5] (v2.center);
	
\flipcrossings{2, 4, 6, 5, 7 }
\end{knot}

\draw[fill] (v1) circle (0.2);
\draw[fill] (v2) circle (0.2);

\draw[fill, white] (11.5, 0) circle (0.2);
\draw[fill, white] (-1.5, 0) circle (0.2);

\draw[->, line width = 3pt] (6.5, -5) to (6.5, -7);
%Second diag ends
\end{tikzpicture}}

\scalebox{0.4}{
%Third diag starts

\begin{tikzpicture}
\node (v1) at (-1,  0){};
\node (v2) at (11,  0){};

\node (l) at (-0.5,  -0.5){};
\node (r) at ( 10.5,  0.5){};

\node (p1) at (3.5,  3){};
\node (p2) at (6.5,  1){};
\node (p3) at (5, -1){};
\node (p4) at (3.5,  1){};
\node (p5) at (6.5,  3){};

\node (q1) at (3.5, -3){};
\node (q2) at (6.5, -1){};
\node (q3) at (5,  1){};
\node (q4) at (3.5, -1){};
\node (q5) at (6.5, -3){};

\node (c)  at (5,  0){};
\node (name) at (5, -5) {{\huge(d)}};

\begin{knot}[consider self intersections, end tolerance=0.5pt, clip width = 5pt]%, draft mode = crossings]
\strand[ultra thick, red] (v1.center)
 	to [out =- 90, in = -90, looseness = 1] (7.5, 0) 
 	to [out =  90, in =   0, looseness = 1] (3.5,  3.3)
 	to [out = 180, in =-135, looseness = 1] (1.3, 0.3) 
	to [out =  45, in = 180, looseness = 1] (p1.center)
	to [out =   0, in =  90, looseness = 1] (p2.center)
	to [out = -90, in =   0, looseness = 1] (p3.center)
	to [out = 180, in = -90, looseness = 1] (p4.center)
	to [out =  90, in =   0, looseness = 1] (3.5,  2)
	to [out = 180, in =  45, looseness = 1] (1, -1)
	to [out = 225, in = 225, looseness = 1] (0, 0)
	to [out =  45, in = 180, looseness = 1] (3.5,  4)
	to [out =   0, in = 135, looseness = 1] (9.2, 0.5)
	to [out = -45, in = 180, looseness = 0.2] (v2.center);
	
\strand[ultra thick, blue] (v1.center) 
	to [out =   0, in = 135, looseness = 0.2] (0.8, -0.5) 
	to [out =  -45, in =  180, looseness = 1] (6.5,  -4)
	to [out =    0, in = -135, looseness = 1] (10, 0)
	to [out =   45, in =   45, looseness = 1] (9, 1)
	to [out = -135, in =    0, looseness = 1] (6.5,  -2)
	to [out =  180, in =  -90, looseness = 1] (q2.center)
	to [out =   90, in =    0, looseness = 1] (q3.center)
	to [out =  180, in =   90, looseness = 1] (q4.center)
	to [out =  -90, in =  180, looseness = 1] (q5.center)
	to [out =    0, in = -135, looseness = 1] (8.7, -0.3) 
	to [out =   45, in =    0, looseness = 1] (6.5,  -3.3)
	to [out =  180, in =  -90, looseness = 1] (2.5, 0) 
	to [out =   90, in =   90, looseness = 1] (v2.center);
	
\strand[ultra thick] (v1.center) 
	to [out =   45, in =  180, looseness = 1] (1.3, -0.2) 
	to [out =    0, in = -100, looseness = 1.5] (c.center)
	to [out =   75, in =  180, looseness = 1.5] (8.7, 0.2) 
	to [out =   0, in = -135, looseness = 1] (v2.center);

\flipcrossings{ 4, 6, 7, 9,11,12, 13,14,15,16,18, 20, 21}
\end{knot}

\draw[fill] (v1) circle (0.2);
\draw[fill] (v2) circle (0.2);
\draw[fill, white] (11.5, 0) circle (0.2);
\draw[fill, white] (-1.5, 0) circle (0.2);
\end{tikzpicture}
%Third diag ends
\raisebox{5.4cm}{
\begin{tikzpicture}
\draw[->, line width = 3pt] (2, 0) to (0, 0);
\end{tikzpicture}}
%Fourth diag starts
\begin{tikzpicture}
\node (v1) at (-1,  0){};
\node (v2) at (9,  0){};

\node (l) at (-0.5,  -0.5){};
\node (r) at ( 10.5,  0.5){};

\node (p1) at (3.5,  3){};
\node (p2) at (6.5,  1){};
\node (p3) at (5, -1){};
\node (p4) at (3.5,  1){};
\node (p5) at (6.5,  3){};

\node (q1) at (3.5, -3){};
\node (q2) at (6.5, -1){};
\node (q3) at (5,  1){};
\node (q4) at (3.5, -1){};
\node (q5) at (6.5, -3){};

\node (c)  at (5,  0){};
\node (name) at (5, -5) {{\huge(c)}};

\begin{knot}[consider self intersections, end tolerance=0.5pt, clip width = 5pt]%, draft mode = crossings]
\strand[ultra thick, red] (v1.center)
 	to [out =-90, in = -90, looseness = 1] (7.5, 0) 
 	to [out = 90, in =   0, looseness = 1] (3.5,  3.3)
 	to [out = 180, in =-135, looseness = 1] (1.3, 0.3) 
	to [out =  45, in = 180, looseness = 1] (p1.center)
	to [out =   0, in =  90, looseness = 1] (p2.center)
	to [out = -90, in =   0, looseness = 1] (p3.center)
	to [out = 180, in = -90, looseness = 1] (p4.center)
	to [out =  90, in =    0, looseness = 1] (3.5,  2)
	to [out =  180, in = 45, looseness = 1] (1, -1)
	to [out =  225, in = 225, looseness = 1] (0, 0)
	to [out =  45, in =  180, looseness = 1] (3.5,  4)
	%to [out =  90, in =  180, looseness = 1] (p5.center)
	to [out =   0, in =  135, looseness = 1] (v2.center);
	
\strand[ultra thick, blue] (v1.center) 
	to [out =   0, in = 135, looseness = 0.2] (0.8, -0.5) 
	%to [out = -45, in = 180, looseness = 1] (q1.center)
	to [out =  -45, in =  180, looseness = 1] (6.5,  -4)
	to [out =    0, in = -135, looseness = 1] (10, 0)
	to [out =   45, in =   45, looseness = 1] (9, 1)
	to [out = -135, in =    0, looseness = 1] (6.5,  -2)
	to [out =  180, in =  -90, looseness = 1] (q2.center)
	to [out =   90, in =    0, looseness = 1] (q3.center)
	to [out =  180, in =   90, looseness = 1] (q4.center)
	to [out =  -90, in =  180, looseness = 1] (q5.center)
	to [out =    0, in = -135, looseness = 1] (v2.center);
	
\strand[ultra thick] (v1.center) 
	to [out =   45, in = 180, looseness = 1] (1.3, -0.2) 
	to [out =   0, in = -100, looseness = 1.5] (c.center)
	to [out =  75, in =  180, looseness = 1.5] (v2.center);

\flipcrossings{6,8, 9, 10, 11, 12, 13}
\end{knot}

\draw[fill] (v1) circle (0.2);
\draw[fill] (v2) circle (0.2);
\draw[fill, white] (11.5, 0) circle (0.2);
\draw[fill, white] (-1.5, 0) circle (0.2);
\end{tikzpicture}}
\caption{Spatial graph diagram transformation}
\label{fig:sp_graph}
\end{figure}
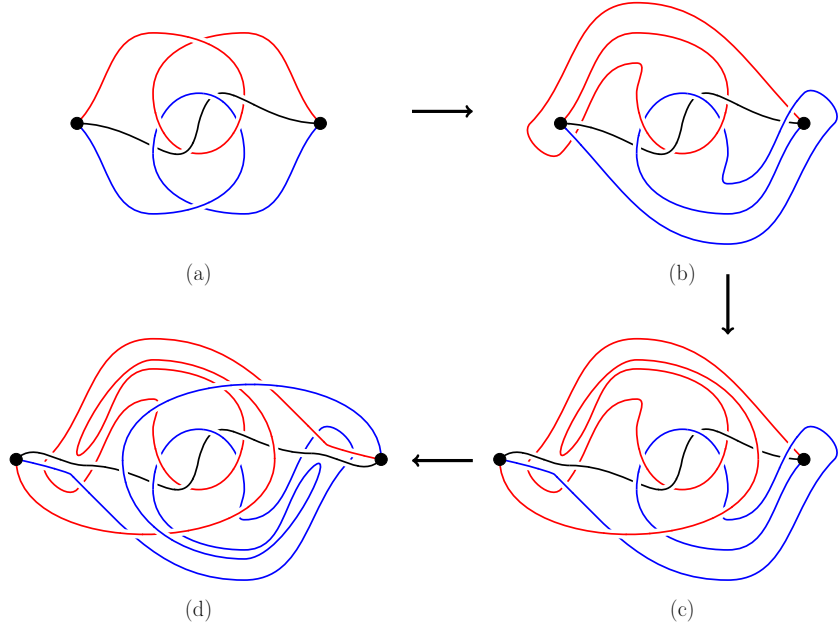

An appropriate transformation can be described as follows: take a small topological closed $2$-disk~$B$ in~$\mathbb{B}^2$ such that 
(i)~$v_0$~is the only vertex of~$D'$ contained in~$B$; 
%(b)~$B$~does not intersect $\partial\operatorname{CH}(D')$;
(ii)~the triple $(B,B\cap D',v_0)$ is homeomorphic to the triple $(B^2,Star_m,0)$, where $B^2$ is a Euclidean 2-disk, $0$ is the center of~$B^2$, $m$ is the valence of~$v_0$, and $Star_m$ is the union of~$m$ radii of~$B^2$. 
The $m$-star\footnote{By an \emph{$m$-star} we mean a topological space homeomorphic to~$Star_m$; the subsets corresponding to the radii of $Star_m$ are called the \emph{legs} of the $m$-star.} $B\cap D'$ is the part of $D'$ that we are going to relocate. 
We introduce additional notation.
Let $p_1, \dots, p_m$ be the points of $\partial B\cap D'$ and let $\alpha_1, \dots, \alpha_m$ denote the legs  of~$B\cap D'$, where
$\alpha_i$ stands for the leg with endpoints at~$p_i$ and~$v_0$. 
Let $A_1, \dots, A_m$ be the principal arcs of~$D'$ containing $p_1, \dots, p_m$, respectively. (We have $A_i=A_j$ whenever for some $i\neq j$ the points $p_i$ and $p_j$ lie on the same looped arc of~$D'$.) 
We observe that since each of $A_1,\ldots,A_m$ is a simple arc having at most one common point with~$\partial\mathbb{B}^2$, it follows that there exists a simple arc~$\beta_i$ with endpoints at~$p_i$ and~$v$ such that (i)~$\beta_i$ does not intersect the arc~$A_i\setminus\alpha_i$; (ii)~$\beta_i$ is contained in~$\mathbb{B}^2$ and $\beta_i\cap\partial\mathbb{B}^2=\{v\}$. 
Also, we observe that if two points $p_i$ and $p_j$ with $i\neq j$ lie on the same principal arc (i.\,e., if $A_i=A_j$) then $\beta_i$ and $\beta_j$ can be chosen such that $\beta_i\cap\beta_j=\{v\}$ in addition to the above conditions (i) and (ii). 
We choose a collection of~$\beta_1$, $\dots$, $\beta_m$ that satisfy all of the above requirements and are in general position both by themselves and with respect to~$D'$. As usual, general position means only double point singularities, at most finite number of double points, in each double point the arcs intersect transversely, no one of $\beta$'s meets a vertex or a crossing of~$D'$, etc. 
Now, let $D''$ be the diagram obtained from~$D'$ by replacing $v_0$ with~$v$ and $\alpha_i$'s with $\beta_i$'s and such that the newly emerged crossings have the following crossing information: (i)~for all crossings between $\beta$'s and arcs of~$D'$, we set that $\beta$'s are over~$D'$; (ii) for all crossings between $\beta_i$ and $\beta_j$ with $i<j$, we let $\beta_j$ go over~$\beta_i$.
We see that~$D''$ is semi-meander, by construction, and 
the set $V_{ext}\cup\{v\}$ is contained in the boundary of the convex hull of~$D''$.
It remains to show that $D''$ represents the same spatial graph as $D'$ does.
We will derive this fact from a mini-theory of \emph{tangle-graphs} developed in the remainder of the present proof.

\subsection*{Tangle-graphs}
Let $\R^3_+$ denote the half-space $\{(x,y,z)\in\R^3\colon z\ge0\}$ in~$\R^3$.
We say that a spatial graph~$T$ in $\R^3$ is a \emph{tangle-graph} if 
(i)~$T$~is contained in~$\R^3_+$ and
%(ii)~the intersection $T\cap\partial\R^3_+$ coincides with the set of valence~one vertices in~$T$.
(ii)~the intersection $T\cap\partial\R^3_+$ of~$T$ with the plane~$\partial\R^3_+$ is a subset of the set of valence~one vertices in~$T$.
%Two tangle-graphs are said to be \emph{weakly TG-equivalent} if they are related by an isotopy of~$\R^3_+$. %(preserving the graph structure).
Two tangle-graphs are said to be (\emph{strongly}) \emph{TG-equivalent} if they are related by an isotopy of~$\R^3_+$ %preserving the graph structure and 
that fixes~$\partial\R^3_+$ pointwise.
A~\emph{TG-diagram} of a tangle-graph~$T$ is a diagram of~$T$ in the sense of spatial graphs (see definitions above) with additional requirements that it lies in~$\partial\R^3_+$ and is obtained from a tangle-graph that is TG-equivalent to~$T$.

By an \emph{$m$-star-tangle-graph}, where $m$ is a positive integer, we mean a connected tangle-graph of $m+1$ vertices with one vertex of valence~$m$ lying in~$\R^3_+ \setminus \partial\R^3_+$ and $m$ valence~one vertices lying in~$\partial\R^3_+$.
%By an \emph{$n$-star-tangle-graph} we mean a connected tangle-graph of $n+1$ vertices such that one of its vertices has valence~$n$ and each other vertex has valence~one.
An $m$-star-tangle-graph is \emph{unknotted} if it is TG-equivalent to an $m$-star-tangle-graph all of whose edges are straight line intervals. 
Obviously, two unknotted $m$-star-tangle-graphs with the same set of valence~one vertices are strongly TG-equivalent.
We say that an $m$-star-tangle-graph~$T$ is \emph{monotone} if the normal projection of each edge of~$T$ on the $z$-axis is injective.  %no edge of~$T$ has more than one point in common with a plane parallel to~$\partial\R^3_+$.

\begin{lem}
\label{lem:monotone}
Each monotone $m$-star-tangle-graph~$T$ is unknotted.
\end{lem}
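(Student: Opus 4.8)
Here is the line of argument I would pursue.

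The plan is to produce a \emph{level-preserving} ambient isotopy of $\R^3_+$ — that is, a family of homeomorphisms of the form $(x,y,z)\mapsto(\phi_{s,z}(x,y),z)$, $s\in[0,1]$, with $\phi_{0,z}=\mathrm{id}$ — which fixes $\partial\R^3_+$ pointwise (equivalently $\phi_{s,0}=\mathrm{id}$ for every $s$) and carries $T$ onto the $m$-star-tangle-graph $T_0$ whose edges are the straight segments joining the same vertices. Level-preserving maps are the natural tool here: monotonicity is a condition on how $T$ meets the horizontal planes, and it is inherited by such maps.

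First I would set up coordinates on $T$. Identify $\partial\R^3_+$ with $\R^2$; let $z_0>0$ be the height of the valence-$m$ vertex $c$, let $v_1,\dots,v_m\in\partial\R^3_+$ be the valence-one vertices, and let $e_i$ be the edge joining $c$ to $v_i$. Monotonicity says that $z$ is injective on each $e_i$, so $z|_T$ is a Morse-type function whose only critical points are the single maximum $c$ and the minima $v_1,\dots,v_m$; hence for each $t\in(0,z_0)$ the plane $\{z=t\}$ meets $T$ transversally in exactly $m$ points, one on each $e_i$, with horizontal coordinates $\xi_1(t),\dots,\xi_m(t)$. These vary continuously, are pairwise distinct for $t\in(0,z_0)$ (because $\overline{e_i}\cap\overline{e_j}=\{c\}$, which lies at height $z_0$), converge to $(v_1,\dots,v_m)$ as $t\to 0$, and all converge to the single point $c'$ (the horizontal coordinate of $c$) as $t\to z_0$. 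Thus $\xi=(\xi_1,\dots,\xi_m)$ is a path parametrized by $[0,z_0]$ in $\mathcal X:=\mathrm{Conf}_m(\R^2)\cup\Delta$, where $\mathrm{Conf}_m(\R^2)$ is the configuration space of ordered $m$-tuples of distinct points of the plane and $\Delta\cong\R^2$ is the small diagonal; this path lies in $\mathrm{Conf}_m(\R^2)$ and meets $\Delta$ only at its endpoint $t=z_0$. The straight star $T_0$ produces, in the same way, the radial path $\xi^0(t)=\bigl((1-t/z_0)v_i+(t/z_0)c'\bigr)_{i}$, with the same two endpoints as $\xi$.

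The crucial observation is that $\mathcal X$ is contractible: splitting a configuration into its centre of mass, its size $\bigl(\sum_i|p_i-\bar p|^2\bigr)^{1/2}$ and its normalized shape identifies $\mathrm{Conf}_m(\R^2)$ with $\R^2\times(0,\infty)\times\mathcal S$ for a compact shape space $\mathcal S$, under which $\Delta$ is exactly the size-zero locus, so $\mathcal X\cong\R^2\times\mathrm{cone}(\mathcal S)$. Hence $\xi$ and $\xi^0$ are homotopic rel endpoints in $\mathcal X$, and one can arrange the homotopy to remain in $\mathrm{Conf}_m(\R^2)$ for $t<z_0$ and to keep all intermediate configurations inside a ball about $c'$ whose radius tends to $0$ as $t\to z_0$. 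I would then feed this homotopy into the isotopy extension theorem applied levelwise in $t$: where $t$ stays away from $z_0$ the homotopy is an ordinary motion of $m$-point configurations and is covered by a continuous family of compactly supported homeomorphisms of the plane beginning at the identity; near $t=z_0$ everything lives in the shrinking ball, so the covering homeomorphisms can be chosen supported there and therefore tending uniformly to the identity. Reassembling these homeomorphisms as the levelwise pieces of an ambient isotopy $(x,y,z)\mapsto(\phi_{s,z}(x,y),z)$, which is supported in a bounded subset of $\R^3_+$ and equals the identity on $\partial\R^3_+$, carries $T$ to $T_0$ and shows $T$ unknotted.

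The main obstacle is the very top, where all $m$ strands of $T$ come together at $c$: the delicate point is to choose the levelwise straightening homeomorphisms continuously \emph{through} this collapse so that they die down to the identity there, and this is exactly where monotonicity is used in an essential way. (Put combinatorially, the content is that an $m$-strand braid all of whose top endpoints have been coned to one point is the trivial $m$-star; the contractibility of $\mathcal X$ is the reason this holds, but one still has to convert that statement into an honest ambient isotopy, which is where the bookkeeping with shrinking supports is needed.)
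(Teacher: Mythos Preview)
Your argument is correct and is the same underlying Alexander-trick straightening that the paper invokes, but you carry it out explicitly rather than by citation. The paper simply appeals to the Denne--Sullivan lemma, which treats edges issuing from a central vertex and transverse to concentric \emph{spheres}; to apply it literally one must first pass from the horizontal-plane monotonicity in the definition to sphere-transversality about~$c$. You instead foliate by the horizontal planes $\{z=t\}$ that appear directly in the definition, so no such conversion is needed, and you replace the radial straightening by the observation that $\mathcal X\cong\R^2\times C(\mathcal S)$ is a cone---hence any two paths ending at the apex are homotopic rel endpoints, and (because the shape coordinate lives on a half-open interval) the homotopy can be kept off the apex for $t<z_0$. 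Your route is more self-contained and makes the combinatorial content visible (the braid word on the $m$ strands becomes irrelevant once the top ends are coned together); the cost is that you must manage the parametrized isotopy extension and its shrinking support near $t=z_0$ by hand, whereas the paper hides that bookkeeping inside the cited lemma.
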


\begin{proof}
This can be proved by the ``Denne--Sullivan trick'' described in~\cite{DS08}. This trick is an analogue of the ''Alexander trick'' and gives an ambient isotopy that gradually straightens the edges of~$T$ and transforms~$T$ into an $m$-star-tangle-graph all of whose edges are straight line intervals. Denne and Sullivan use this technique when proving the following lemma, which obviously implies Lemma~\ref{lem:monotone}. 
\begin{lem*}[{\cite[Lemma 4.1]{DS08}}]
Let $\mathbb{B}$ be a Euclidean ball centered at $p$, and suppose spatial graphs $\Gamma$ and $\Gamma'$ each consist of $n$ edges starting at $p$ and proceeding out to $\partial \mathbb{B}$ transverse to the concentric nested spheres around $p$. Then $\Gamma$ and $\Gamma'$ are ambient isotopic by an isotopy of $\mathbb{B}$. \qedhere
\end{lem*}
\end{proof}

Now, we apply Lemma~\ref{lem:monotone} to our transformation $D'\to D''$ described above. 
We say that a TG-diagram~$D$ of an $m$-star-tangle-graph~$T$ is \emph{monotone} if (i)~each edge of~$T$ is presented by a simple arc of~$D$ and (ii)~there is an enumeration of edges of~$T$ such that at each crossing of~$D$ the arc representing the edge with the larger number passes over the arc representing the edge with the smaller one.
Clearly, in the diagram~$D''$ above, the subdiagram formed by $\beta$'s can be interpreted as a monotone TG-diagram of an $m$-star-tangle-graph.
The same is trivially true for the subdiagram formed by $\alpha$'s in~$D'$.
It can be easily seen that each $m$-star-tangle-graph with a monotone TG-diagram is strongly TG-equivalent to a monotone $m$-star-tangle-graph.
Consequently, any two $m$-star-tangle-graphs with the same set of valence~one vertices and having monotone TG-diagrams are unknotted (see Lemma~\ref{lem:monotone}) and hence strongly TG-equivalent.
In an obvious way, this implies that $D''$ represents the same spatial graph as $D'$ does, 
which completes the proof of Theorem~\ref{th:meander-SpGr-4}.

\section{Remarks and comments}
\label{sec:remarks}

\subsection{Algorithmic construction of meander and semi-meander diagrams}\label{subsec:algorithmic}
We observe that the above proof of Theorem~\ref{th:meander-SpGr-4} provides an algorithm that transforms a given semi-meander diagram into a meander one. 
However, Theorem~\ref{th:Shinjo} does not give a direct algorithm to produce a semi-meander diagram.
In this connection it should be noted that there exists a simple way to construct semi-meander diagrams for spatial graphs without looped edges. 
%where each edge is represented by a simple arc: 
Indeed, let $D$ be a diagram of a spatial graph~$G$ without looped edges; if a principal arc~$C$ of~$D$ has self-crossings, we can eliminate a self-crossing of~$C$ that is nearest to an endpoint of~$C$ by a move as in Fig.~\ref{fig:for_rem}; each of newly emerged crossings involves arcs representing distinct edges, so that a finite number of such moves yields a semi-meander diagram.
For the case of knots, the idea of obtaining semi-meander diagrams in this way appears in~\cite{Oza07} (see also~\cite{AST11}). 

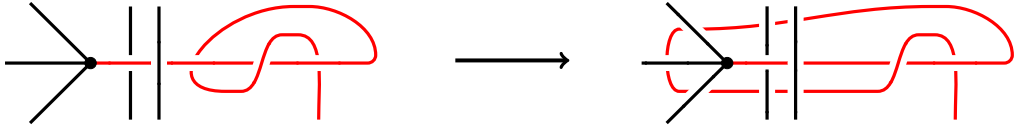
\begin{figure}[h]
\scalebox{0.75}{
\begin{tikzpicture}
\node (v) at (0,  0){};

\node (e1) at (-0.707106781*1.5,  0.707106781*1.5){};
\node (e2) at (-1*1.5,  0){};
\node (e3) at (-0.707106781*1.5,  -0.707106781*1.5){};

\node (p1) at (2.5*2,  0){};
\node (p2) at (1.85*2,  1){};
\node (p3) at (1.25*2, -0.25*2){};
\node (p4) at (1.75*2,  0.5){};
\node (p5) at (2.0*2, -0.5*2){};

\node (a1) at (0.7,  1){};
\node (a2) at (0.7, -1){};

\node (b1) at (1.2,  1){};
\node (b2) at (1.2, -1){};

\begin{knot}[consider self intersections, end tolerance=0.5pt, clip width = 5pt]%, draft mode = crossings]
\strand[ultra thick] (e1.center)
 	to [out =-45, in = 135, looseness = 0] (v.center);
 	 
\strand[ultra thick] (e2.center)
 	to [out =  0, in = 180, looseness = 0] (v.center);

\strand[ultra thick] (e3.center)
 	to [out = 45, in =-135, looseness = 0] (v.center);

\strand[ultra thick] (a1.center)
 	to [out = -90, in =90, looseness = 0] (a2.center);
 	
\strand[ultra thick] (b1.center)
 	to [out = -90, in =90, looseness = 0] (b2.center);
	
\strand[ultra thick, red, rounded corners] (v.center)  
	to (p1.center)
	to [out =  90, in =    0, looseness = 1] (p2.center)
	to [out = 180, in =  180, looseness = 2] (p3.center)
	to [out =   0, in =  180, looseness = 1] (p4.center)
	to [out =   0, in =   90, looseness = 1] (p5.center);

\flipcrossings{1, 4}
\end{knot}

\draw[fill] (v) circle (0.1);
\draw[fill, white] (6,0) circle (0.1);

\end{tikzpicture}
\raisebox{1.2cm}{
\begin{tikzpicture}
\draw[->, line width = 2pt] (0, 0) to (2, 0);
\end{tikzpicture}}
\begin{tikzpicture}
\node (v) at (0,  0){};

\node (e1) at (-0.707106781*1.5,  0.707106781*1.5){};
\node (e2) at (-1*1.5,  0){};
\node (e3) at (-0.707106781*1.5,  -0.707106781*1.5){};

\node (p1) at (2.5*2,  0){};
\node (p2) at (1.85*2,  1){};
\node (p_p1) at (-0.7,  0.3*2){};
\node (p_p2) at (-0.7, -0.25*2){};
\node (p3) at (1.25*2, -0.25*2){};
\node (p4) at (1.75*2,  0.5){};
\node (p5) at (2.0*2, -0.5*2){};

\node (a1) at (0.7,  1){};
\node (a2) at (0.7, -1){};

\node (b1) at (1.2,  1){};
\node (b2) at (1.2, -1){};

\begin{knot}[consider self intersections, end tolerance=0.5pt, clip width = 5pt]%, draft mode = crossings]
\strand[ultra thick] (e1.center)
 	to [out =-45, in = 135, looseness = 0] (v.center);
 	 
\strand[ultra thick] (e2.center)
 	to [out =  0, in = 180, looseness = 0] (v.center);

\strand[ultra thick] (e3.center)
 	to [out = 45, in =-135, looseness = 0] (v.center);

\strand[ultra thick] (a1.center)
 	to [out = -90, in =90, looseness = 0] (a2.center);
 	
\strand[ultra thick] (b1.center)
 	to [out = -90, in =90, looseness = 0] (b2.center);
	
\strand[ultra thick, red, rounded corners] (v.center)  
	to (p1.center)
	to [out =  90, in =    0, looseness = 1] (p2.center)
	to [out = 180, in =    0, looseness = 1] (p_p1.center)
	to [out = 180, in =  180, looseness = 1] (p_p2.center)
	to [out =   0, in =  180, looseness = 1] (p3.center)
	to [out =   0, in =  180, looseness = 1] (p4.center)
	to [out =   0, in =   90, looseness = 1] (p5.center);

\flipcrossings{10,  4}
\end{knot}

\draw[fill] (v) circle (0.1);
\draw[fill, white] (-2.5,0) circle (0.1);
\end{tikzpicture}}
\caption{Move eliminating a self-crossing}
\label{fig:for_rem}
\end{figure}

\subsection{An alternative way of proving Theorem~\ref{th:meander-SpGr-4}}
We briefly discuss another method to show that the existence of a semi-meander diagram for a spatial graph implies the existence of a meander one. This alternative way of proof is not constructive (cf.~Section~\ref{subsec:algorithmic}) and based on the phenomenon when an object has a representation with prescribed subrepresentations for parts of the object. 
J.\,H.~Lee and G.\,T.~Jin showed in~\cite{LJ01} that this principle works for the case of links and their diagrams. In~\cite{Shi05}, R.~Shinjo extends their results to the case of spatial graphs (this gives precisely Theorem~\ref{th:Shinjo} above).
The arguments of~\cite{LJ01} deal with the Reidemeister moves for subdiagrams and apply verbatim for tangle diagrams.
Here, by a \emph{tangle} we mean a pair $(B^3,T)$, where $B^3$ is the $3$-ball and $T$ is a compact regular proper $1$-submanifold
in~$B^3$.
A~component~$I$ of a tangle~$(B^3,T)$ is said to be \emph{unknotted} if $I$ lies on a $2$-disk $B^2$ properly embedded in~$B^3$. (For more details on tangles, see~\cite{Kaw96}.)
Applying arguments of~\cite{LJ01} to tangle diagrams immediately yields the following proposition. 

\begin{prop}
\label{pr:meander-tangles}
If $D$ is a diagram of a tangle~$T$, then there exists a diagram~$D'$ of~$T$ such that $D$ and $D'$ are related by a sequence of Reidemeister moves and each unknotted component of~$T$ is represented by a simple arc in~$D'$.
\end{prop}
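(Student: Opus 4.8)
The plan is to deduce Proposition~\ref{pr:meander-tangles} from the tangle analogue of the ``prescribed subdiagram'' principle of Lee and Jin, paralleling Shinjo's extension of that principle to spatial graphs (Theorem~\ref{th:Shinjo}). The only structural facts about tangles that are needed are that they carry a Reidemeister calculus (any two diagrams of a tangle are connected by a finite sequence of Reidemeister moves and planar isotopies, rel the endpoints on~$\partial B^3$) and that a trivial one-string tangle has a crossingless diagram; since these hold, the arguments of~\cite{LJ01} transcribe verbatim. Concretely, I would first record the following tangle version of the Lee--Jin theorem: if $T=T^1\sqcup\dots\sqcup T^k$ is a tangle with components $T^1,\dots,T^k$ and $E_1,\dots,E_k$ are prescribed diagrams of $T^1,\dots,T^k$ (each viewed as a one-string tangle), then $T$ has a diagram $\widetilde D$ whose restriction to $T^j$ is isotopic to $E_j$ for every~$j$.

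To prove this tangle version I would carry over the key local manoeuvre of~\cite{LJ01}. A Reidemeister move (or a planar isotopy) that is applicable to the \emph{abstract} diagram of a single component $T^j$ can always be realized inside the \emph{ambient} tangle diagram~$D$: the small disk in which the move takes place may be crossed by arcs of other components, but one pushes the arcs of $T^j$ that are created or displaced by the move \emph{over} all of those foreign arcs. This alters only the crossings between $T^j$ and the remaining components, hence leaves every restriction $D|_{T^i}$ with $i\ne j$ literally unchanged, while changing $D|_{T^j}$ by precisely the prescribed abstract move. Consequently, for each fixed $j$ one may replace $D|_{T^j}$ by any diagram obtained from it by a sequence of Reidemeister moves without disturbing the other restrictions. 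Since $D|_{T^j}$ and $E_j$ are two diagrams of the one-string tangle $T^j$, they are so connected; applying this successively for $j=1,\dots,k$ yields the desired diagram~$\widetilde D$.

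Granting the tangle version, the proposition follows quickly. Each unknotted component $I$ of $T$ lies, by definition, on a $2$-disk properly embedded in $B^3$, so as a one-string tangle it is trivial and therefore admits a diagram $E_I$ that is a simple arc with no crossings. Applying the tangle Lee--Jin theorem to $T$ with $E_j$ chosen to be such a simple diagram whenever $T^j$ is unknotted (and $E_j$ any diagram of $T^j$ otherwise) produces a diagram $D'$ of $T$ in which every unknotted component is represented by a simple arc. Finally, $D$ and $D'$ are two diagrams of the same tangle, so by the Reidemeister theorem for tangles they are related by a finite sequence of Reidemeister moves, which is the remaining assertion.

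The main obstacle --- really the only nonroutine point --- is the local manoeuvre of the second paragraph: one must check that pushing the altered arcs of $T^j$ over the foreign strands can be carried out keeping $T^j$ free of new self-crossings (beyond those dictated by the abstract move), keeping the endpoints on $\partial B^3$ fixed, and keeping the whole picture a legitimate tangle diagram. But this is exactly the verification performed for links in~\cite{LJ01}; the ambient ball plays no role in it, so it transfers without change to the tangle setting. Everything else is bookkeeping with the Reidemeister calculus.
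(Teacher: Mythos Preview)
Your proposal is correct and follows exactly the route indicated in the paper, which merely asserts that ``applying arguments of~\cite{LJ01} to tangle diagrams immediately yields'' the proposition; you have spelled out precisely those arguments. One cosmetic remark: the construction you describe already proceeds by ambient Reidemeister moves, so the appeal to the Reidemeister theorem for tangles in your final sentence is superfluous (the sequence of moves from $D$ to $D'$ is built in).
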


Proposition~\ref{pr:meander-tangles} implies Theorem~\ref{th:meander-SpGr-4} as follows. When given a semi-meander diagram~$D'$, 
we place~$D'$ inside a Euclidean $2$-disk~$\mathbb{B}^2$ such that~$D'$ does not meet $\dd \mathbb{B}^2$,
then transform~$D'$ into a diagram~$D''$ by moving successively all of the vertices of~$D'$ to~$\dd \mathbb{B}^2$ as in the above prove of Theorem~\ref{th:meander-SpGr-4}, but this time let all of the $\beta$'s for each vertex be straight line segments. 
Now, let $R$ be the radius of $\mathbb{B}^2$ and let $\mathbb{B}^2_{R-\varepsilon}$ denote the Euclidean $2$-disk of radius~$R-\varepsilon$ concentric with~$\mathbb{B}^2$.
If~we choose $\varepsilon>0$ small enough, then $\mathbb{B}^2_{R-\varepsilon}$ contains all of the crossings of~$D''$ and $D''$ imprints a tangle diagram on~$\mathbb{B}^2_{R-\varepsilon}$. We denote this tangle diagram by~$D''_\varepsilon$.
%If~we choose $\varepsilon>0$ small enough, then $D''\cap \mathbb{B}^2_{R-\varepsilon}$ will be a tangle diagram containing all of the crossings of~$D''$.
It~can be easily seen that no component of the tangle represented by~$D''_\varepsilon$ is knotted because all of the $\beta$'s are straight line segments. Then Proposition~\ref{pr:meander-tangles} shows that a sequence of Reidemeister moves supported in~$\mathbb{B}^2_{R-\varepsilon}$ turns $D''$ into a meander diagram.

\subsection{Composing knots and spatial graphs of plane arcs}
The fact that each knot has a semi-meander diagram yields the following corollary.
\begin{cor}
Each knot in~$\R^3$ is ambient isotopic to a knot composed of two plane arcs.
\end{cor}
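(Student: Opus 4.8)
The plan is to deform a semi-meander diagram of the given knot into a position where pushing one of its two simple arcs off the diagram plane automatically keeps that arc inside a single plane.

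Since each knot has a semi-meander diagram (regard the knot as a spatial graph with two vertices and two edges), we may start from a diagram $D$ of the given knot $K$ that is a union of two simple arcs $\alpha_0$ and $\beta_0$ with common endpoints $v_1,v_2$, all of whose crossings are between $\alpha_0$ and $\beta_0$. \emph{Normalization.} Because $\beta_0$ is a simple arc in the diagram plane $\Pi$, there is a self-homeomorphism of $\Pi$, isotopic to the identity, that carries $\beta_0$ onto a straight segment $\sigma$ lying on a line $\ell\subset\Pi$; applying it, we obtain a semi-meander diagram of $K$ in which one principal arc is the segment $\sigma$ and all crossings of the diagram lie on $\sigma$ (hence on $\ell$). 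Rename the other principal arc $\alpha$; it is still a simple arc.

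\emph{Realization.} Now take $\Pi=\{z=0\}$, keep $\alpha$ fixed in $\Pi$, and realize $K$ from this diagram by resolving each crossing on the $\sigma$-side only: near the $i$-th crossing push $\sigma$ to height $z=+\varepsilon$ or $z=-\varepsilon$ according as $\sigma$ over- or under-crosses $\alpha$ there, while leaving $v_1$ and $v_2$ at height $0$. The resulting curve is an embedded knot whose diagram, together with its over/under data, is exactly the normalized one, so it is ambient isotopic to $K$; and in this realization $\alpha$ lies in the plane $\{z=0\}$, whereas the modified $\sigma$ is the graph of a function over the segment $\sigma$ and therefore lies in the single vertical plane $\ell\times\R_z$ spanned by $\ell$ and the $z$-direction. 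Thus $K$ is exhibited as a union of two plane arcs, one in $\{z=0\}$ and one in $\ell\times\R_z$, meeting at the two points of $\alpha\cap\sigma$ on $\ell$, which is what the corollary asserts.

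The verifications needed are routine: the modified $\sigma$ is embedded and meets $\alpha$ only at $v_1,v_2$ because it is the graph of a function whose nonconstant parts are supported near the crossings, where $\alpha$ has height $0$; and a diagram with its crossing information determines the knot up to ambient isotopy. The one genuinely essential point, and the only place requiring care, is the normalization step: straightening one of the two simple arcs of a semi-meander diagram to an honest segment is precisely what forces its crossing-resolved lift to sit inside a plane rather than on a curved cylinder over $\beta_0$. Apart from that, the argument meets no obstacle.
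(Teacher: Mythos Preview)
Your argument is correct. Straightening one of the two simple arcs of a semi-meander diagram to a genuine line segment, keeping the other arc in the diagram plane, and resolving all crossings by lifting only the straightened arc inside the vertical plane over that segment does produce a knot isotopic to $K$ that is a union of two plane arcs. The checks you list (embeddedness of the lifted arc, intersection only at the two endpoints, recovery of the correct crossing data) are all valid.

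Your route, however, differs from the paper's. The paper does not straighten either arc. Instead, starting from a (semi-)meander diagram with simple arcs $A_1,A_2$, it fixes a straight line $L$ in the diagram plane and applies a planar isotopy that moves all crossings where $A_1$ is over $A_2$ to one side of $L$ and all crossings where $A_2$ is over $A_1$ to the other side. After this separation, one arc can remain in the horizontal plane while the other is placed in a single tilted plane through $L$: on one side of $L$ it lies above the horizontal plane, on the other side below, automatically realizing all the prescribed over/under data. Your method avoids the crossing-separation isotopy entirely by making the lifted arc live over a line from the start; this is a bit more direct for the present corollary. The paper's method, on the other hand, treats both corollaries uniformly: with $L$ chosen through the two endpoints (which for a meander diagram lie on the boundary of the convex hull), the same construction immediately gives the stronger statement that the two common endpoints lie on the boundary of the convex hull of the resulting spatial knot. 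Your normalization step does not obviously yield this refinement, since straightening $\beta_0$ gives no control over where the endpoints sit relative to the convex hull of the image of $\alpha$.
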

Furthermore, the fact that each knot has a meander diagram yields the following corollary.
\begin{cor}
Each knot in~$\R^3$ is ambient isotopic to a knot~$K$ composed of two plane arcs whose common endpoints lie on the boundary of the convex hull of~$K$.
\end{cor}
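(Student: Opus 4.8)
The plan is to derive this corollary from the existence of meander diagrams for knots, which is contained in Theorem~\ref{th:meander-SpGr-4} once the knot is regarded as a spatial graph with two vertices and two edges. So let $D=\gamma_1\cup\gamma_2$ be a meander diagram of the given knot, with $\gamma_1,\gamma_2$ simple arcs meeting at two points $v_1,v_2\in\partial\,\mathrm{conv}(D)$; by a preliminary ambient isotopy I may assume that $v_1$ and $v_2$ are exposed extreme points of $\mathrm{conv}(D)$, situated well outside a small ball containing the rest of the picture. I would realize $D$ in $\R^3$ in the standard way, placing the shadow in the plane $E_1=\{z=0\}$ and, at each crossing, pushing the arc $\gamma_2$ slightly above $E_1$ (if $\gamma_2$ is the overpass there) or slightly below (if it is the underpass), so that $\gamma_1$ stays planar in $E_1$ throughout. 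I would then apply a homeomorphism of $E_1$, extended to $\R^3$ as $h\times\mathrm{id}_z$ and fixing $v_1,v_2$, carrying $\gamma_1$ onto the straight segment $\ell_0=[v_1,v_2]$; after a further planar isotopy of the shadow of $\gamma_2$ (not crossing $\gamma_1$) I may also arrange that this shadow meets the full line $\ell\supset\ell_0$ only at $v_1,v_2$ and at the crossing points $c_1,\dots,c_k$, transversally. Now $\gamma_1$ is a segment on $\ell$, $\gamma_2$ is an arc whose shadow crosses $\ell$ exactly at the $c_j$, each carrying an over/under label, and the diagram still represents the original knot.

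The key step comes next. First I would \emph{bump} the segment $\gamma_1$: near each $c_j$ replace a short subarc of $\ell_0$ by a small detour lying (just) on the side $y>0$ of $\ell$ if $\gamma_2$ is the overpass at $c_j$, and on the side $y<0$ if it is the underpass, the detours chosen disjoint and so small that each still meets the shadow of $\gamma_2$ in exactly one point near $c_j$ and in the same place in the cyclic order of crossings. Since this is merely a planar isotopy of the shadow that leaves the combinatorial diagram unchanged, $\gamma_1$ is still a plane arc in $E_1$ and the knot type is unchanged. Second, I would leave this bumped $\gamma_1$ in place but move $\gamma_2$ into the tilted plane $E_2=\{z=\varepsilon y\}$, keeping its shadow fixed (explicitly, sending each shadow point $(x,y)$ to $(x,y,\varepsilon y)$); for small $\varepsilon>0$ this is an ambient isotopy of the knot. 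I claim $\gamma_1\cup\gamma_2$ is then the desired knot composed of two plane arcs.

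Checking the claim is routine. The arcs lie in the distinct planes $E_1,E_2$ with $E_1\cap E_2=\ell\ni v_1,v_2$, so $\gamma_1\cap\gamma_2\subset\ell$; since $\gamma_2\subset E_2$ meets $\ell$ exactly at $v_1,v_2,c_1,\dots,c_k$ (the points of its shadow with $y=0$), while the bumped $\gamma_1$ contains $v_1,v_2$ but avoids every $c_j$, we get $\gamma_1\cap\gamma_2=\{v_1,v_2\}$. Projecting along the $z$-axis returns the bumped diagram, and over the crossing near a given $c_j$ the strand of $\gamma_2$ sits at height $z=\varepsilon y$ with $y$ of the sign dictated by the bump, hence it passes over $\gamma_1$ exactly when $\gamma_2$ was the overpass at $c_j$; thus this diagram is the bumped one, which represents the original knot. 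Since $v_1,v_2$ were placed as exposed extreme points outside a small ball and all the modifications are confined near $E_1$, choosing $\varepsilon$ and the bumps small enough keeps $v_1,v_2$ extreme, so they lie on the boundary of $\mathrm{conv}(\gamma_1\cup\gamma_2)$. The main obstacle is exactly the rigidity that, once the two planes are fixed, the over/under datum at a crossing is forced to agree with the side of $\ell$ containing it; the bumping step is the device that reconciles this rigidity with an arbitrary meander diagram, and the rest is bookkeeping.
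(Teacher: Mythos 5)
Your proof follows essentially the same route as the paper's (which is itself only a sketch citing Matsuzaki): keep one arc of a meander diagram in a plane $E_1$, lift the other into a plane $E_2$ tilted about the line $\ell$ through the two common endpoints, and arrange beforehand that every crossing lies on the side of $\ell$ dictated by its over/under datum. The paper does the sorting by isotoping the crossings to the two sides of $\ell$; your device of straightening $\gamma_1$ onto the chord $\ell_0=[v_1,v_2]$ and then bumping it off $\ell$ near each crossing achieves the same sorting locally, and it correctly realizes the knot type and the identity $\gamma_1\cap\gamma_2=\{v_1,v_2\}$. (For the latter you do not even need the intermediate claim that the shadow of $\gamma_2$ meets $\ell$ only at $v_1,v_2,c_1,\dots,c_k$: any extra intersection of $\gamma_2$ with $\ell\setminus\ell_0$ is off $\gamma_1$ anyway.)

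There is, however, one step that fails as written, and it affects the convex-hull conclusion. After an \emph{arbitrary} homeomorphism carrying $\gamma_1$ onto $\ell_0$, a subarc of the shadow of $\gamma_2$ joining two consecutive crossings may leave $\ell_0$ on one side and return on the other; such a subarc is forced to cross $\ell\setminus\ell_0$, i.e., to wind around $v_1$ or $v_2$. This refutes the claim that the shadow can then be isotoped to meet $\ell$ only at $v_1,v_2,c_1,\dots,c_k$, and, worse, it places that endpoint in the \emph{interior} of $\operatorname{conv}(K)$: the segment of $\ell$ between the nearby points of $\gamma_2\cap\ell$ and $\gamma_1\cap\ell$ contains the endpoint in its relative interior, while $K$ has points strictly on both sides of every plane of the pencil through $\ell$. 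Your justification (``all the modifications are confined near $E_1$'', small $\varepsilon$ and small bumps) does not cover this, because the straightening is a large planar rearrangement that can destroy the extremality you set up at the start. The fix is short and uses ingredients you already have: after making $v_1,v_2$ exposed extreme points far from the rest of the diagram, perform the straightening by an isotopy supported in a convex region $W$ with $W\cap\ell=[v_1,v_2]$ (for instance a round disk containing the diagram with $v_1,v_2$ antipodal on its boundary). Then the shadow of $\gamma_2$ never leaves $W$, so it cannot meet $\ell\setminus\ell_0$, your intermediate claim becomes true, and $v_1,v_2$ remain on $\partial\operatorname{conv}(K)$. With that one adjustment made explicit, the argument is complete.
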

Proofs of these two corollaries can be obtained by a direct application of arguments from~\cite{Mat15}. 
To be more specific: for a given knot, we take its arbitrary meander diagram~$D$ composed of two simple arcs $A_1, A_2$ and choose a straight line~$L$ on the plane of the diagram. Then, we denote by $X_1$ (resp., $X_2$) the set of crossings of~$D$ where $A_1$ is over $A_2$ (resp., $A_2$ is over $A_1$) and apply an isotopy of~$D$ that moves $X_1$ and $X_2$ to the different sides of~$L$. After this, one can easily see how a knot with the required property can be obtained. 

These arguments have an obvious generalization to the case of spatial graphs.

\begin{cor}
Each spatial graph without knotted loops is ambient isotopic to a spatial graph all of whose edges are plane and all of whose vertices lie on the boundary of its convex hull.
\end{cor}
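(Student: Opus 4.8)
The plan is to deduce this corollary from Theorem~\ref{th:meander-SpGr-4} by adapting, to the setting of spatial graphs, the construction of~\cite{Mat15} sketched above for knots.

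Since $G$ has no knotted loops, Theorem~\ref{th:meander-SpGr-4} provides a meander diagram $D$ of $G$ in which every principal arc is simple. Write $E_1,\dots,E_k$ for the principal arcs (the images of the edges of $G$) and $v_1,\dots,v_n$ for the vertices; by the definition of a meander diagram, all of $v_1,\dots,v_n$ lie on the boundary of the convex hull of $D$. Place $D$ in the plane $\Pi=\{z=0\}\subset\R^3$. For every $i$ fix a straight line $L_i$ through the endpoints of $E_i$ (any line through its endpoint if $E_i$ is a loop) and an affine function $d_i\colon\Pi\to\R$ vanishing exactly on $L_i$. Given an angle $\theta_i$, lift $E_i$ off $\Pi$ by sending each point $q\in E_i$ to $(q,\tan\theta_i\,d_i(q))$; the result is a simple arc lying in the plane $P_i=\{z=\tan\theta_i\,d_i(x,y)\}$, which projects orthogonally back onto $E_i$ and which is at height $0$ at the endpoints of $E_i$ (where $d_i=0$), so that the lifts of all the $E_i$ fit together at the vertices. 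Performing this for all $i$, keeping the $v_j$ fixed, and perturbing into general position (each $E_i$ within its plane $P_i$), we obtain a spatial graph $G'$, all of whose edges are plane, which projects orthogonally onto $D$ and whose vertices lie in $\Pi$.

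It remains to arrange two things: $(\ast)$~each $v_j$ lies on the boundary of the convex hull of $G'$; and $(\ast\ast)$~$G'$ is ambient isotopic to $G$. Fact $(\ast)$ is immediate: writing $H$ for the convex hull of $D$, we have $G'\subset H\times\R$, so the convex hull of $G'$ is contained in the convex set $H\times\R$, while a supporting line $m$ of $H$ at $v_j$ inside $\Pi$ yields a vertical plane $m\times\R$ that contains $v_j$ and supports $H\times\R$, hence also the convex hull of $G'$, at $v_j$. For $(\ast\ast)$ the orthogonal projection of $G'$ onto $\Pi$ is a regular projection with underlying diagram $D$, so it is enough to realize the over/under information prescribed by $D$: at a crossing where $E_i$ runs over $E_j$, occurring at a planar point $q$, the two strands of $G'$ have heights $\tan\theta_i\,d_i(q)$ and $\tan\theta_j\,d_j(q)$, so the requirement is $\tan\theta_i\,d_i(q)>\tan\theta_j\,d_j(q)$. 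This inequality is affine in $q$ and fails exactly along the line $\ell_{ij}=\{\tan\theta_i\,d_i=\tan\theta_j\,d_j\}$, which passes through $L_i\cap L_j$ and rotates about that point as $\theta_i$ and $\theta_j$ vary.

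Thus everything comes down to the following step, which I expect to be the main obstacle: perform a preliminary planar isotopy of $D$ (keeping the vertices on the boundary of the convex hull) and choose the angles $\theta_1,\dots,\theta_k$ so that, for every pair $i\neq j$, each crossing of $E_i$ over $E_j$ ends up on the correct side of $\ell_{ij}$. For a knot this is exactly the step ``move $X_1$ and $X_2$ to the two sides of a line'' of~\cite{Mat15}: there are only two simple arcs, every crossing is an $A_1$--$A_2$ crossing, and the separating line $\ell_{12}$ can be taken through the two common endpoints of $A_1$ and $A_2$. For a spatial graph there are several arcs, the pivot $L_i\cap L_j$ of each separating line is forced by the positions of the vertices (for two parallel edges the whole line $\ell_{ij}$ is forced to be the chord through their common endpoints), and all the separations must be arranged compatibly; the room to do so should come from the meander structure --- the principal arcs are simple and the vertices can be kept in convex position --- which allows one to first comb the crossings into suitable clusters and then tilt the planes $P_i$ one at a time, at each stage rotating the relevant lines $\ell_{ij}$ clear of the clusters already fixed. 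Carrying this out rigorously, for instance by an induction on the number of crossings patterned on the knot case, is the technical heart of the argument; granting it, $G'$ is the required spatial graph and the corollary follows.
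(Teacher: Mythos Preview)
Your setup is sound and in the same spirit as the paper's own sketch (which is nothing more than the remark that the \cite{Mat15} argument has ``an obvious generalization to the case of spatial graphs''): lift each principal arc into a plane hinged along the chord through its endpoints, and check that the vertices stay on the boundary of the convex hull. The reductions you make to $(\ast)$ and $(\ast\ast)$ are correct. But the proof is incomplete at exactly the point you yourself flag: you do not carry out the ``main obstacle'' step, and ``granting it'' is not a proof. That step is the entire content of the argument.

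Moreover, in the graph case this is not a routine detail. For a $\theta_n$-graph (two vertices, $n\ge 3$ edges) all the lines $L_i$ coincide, so all your planes $P_i$ share the same hinge line $L$; on each side of $L$ the lifted edges are then totally ordered by height, and every crossing must respect that single order on its side. Thus the whole crossing pattern would have to split, by a planar isotopy keeping the arcs simple and the two vertices fixed on $L$, into two ``layered'' halves compatible with one total order of the tilt constants. The knot case is easy precisely because there is only one pair of arcs and one separating line; here the constraints for all pairs $(i,j)$ are coupled through the common hinge, and your inductive sketch (``tilt the planes one at a time, rotating $\ell_{ij}$ clear of clusters already fixed'') does not address this coupling, since for parallel edges the line $\ell_{ij}=L$ cannot be rotated at all. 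Showing that these coupled constraints can always be simultaneously satisfied is a genuine argument that you have not supplied.
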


\section{Generalized potholder diagrams for spatial graphs}
\label{sec:potholder}
As noted in the introduction, Conjecture~(C2) has an impressive strengthening in terms of potholder diagrams. %(see~\cite{E-ZHLN18}). 
The potholder diagrams of knots are a very specific subclass of meander diagrams, 
and it is shown in~\cite{E-ZHLN18} that each knot has a potholder diagram.
%Theorem~\ref{th:meander-SpGr-4} can be strengthened analogously:
It turns out that Theorem~\ref{th:meander-SpGr-4} can be strengthened in a similar way:
focusing on the results obtained in~\cite{E-ZHLN18}, one can 
%extend 
generalize the concept of potholder diagrams to the case of spatial graphs and show that each spatial graph has a generalized potholder diagram. 
However, we refrain from giving related definitions and statements in general case because the definitions we obtain are rather cumbersome. 
%concerning potholder-like diagrams for the case of general spatial graphs 
Instead, we discuss a construction of generalized potholder diagrams in a simple but revealing particular case of spatial graphs having~\emph{odd meander diagrams}.

\begin{defn}
We say that a meander diagram~$D$ of a spatial graph is~\emph{odd} if the number of common crossings is odd for each pair of principal arcs of~$D$.
\end{defn}

The following lemma can be easily deduced from Theorem~\ref{th:meander-SpGr-4}.

\begin{lem}
Any loopless spatial graph with at most three vertices has odd meander diagrams.
%Obviously, any loopless spatial graph with two vertices and two edges has an odd meander diagram.
%More generally, 
%More generally, it can be easily checked that any spatial $\theta_n$-curve and any loopless spatial graph with three vertices has an odd meander diagram.
\end{lem}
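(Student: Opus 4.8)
The plan is to apply Theorem~\ref{th:meander-SpGr-4} to obtain a meander diagram and then, if necessary, modify it by local moves that toggle the parity of crossings between a given pair of principal arcs without destroying the meander property. Since a loopless spatial graph has no exceptional principal arcs, a meander diagram is simply a diagram all of whose principal arcs are simple, with all vertices on the boundary of the convex hull. With at most three vertices, the number of edges is also bounded (at most three for a simple graph, but multi-edges are allowed — still, the combinatorics of pairs of edges is manageable), so there are only finitely many pairs of principal arcs whose crossing parity we must control.

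First I would fix a meander diagram $D$ of the given spatial graph $G$, guaranteed by Theorem~\ref{th:meander-SpGr-4}. The key observation is the following: given two simple principal arcs $A$ and $A'$ of a meander diagram, one can perform a move that changes the number of crossings between $A$ and $A'$ by exactly one while keeping both arcs simple and keeping all vertices on the convex hull boundary. Concretely, because every vertex lies on $\partial(\operatorname{conv} D)$, each principal arc has at least one endpoint accessible from the ``outside''; one can push a small finger of $A$ out past the convex hull near one of its endpoints, route it around the outside of the diagram, and bring it back so that it crosses $A'$ once more (or, dually, a Reidemeister-II-type cancellation removes two crossings, changing parity by zero — so the relevant move is the single-crossing one, realized as a finger move through the exterior region). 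Since the finger travels entirely in the exterior complementary region and returns to a neighborhood of an endpoint of $A$, it can be made to avoid self-intersections of $A$ and to cross each other arc an even number of times except $A'$, which it crosses exactly once; after an isotopy absorbing the finger, $A$ remains simple and the parity with $A'$ has flipped while parities with all other arcs are unchanged.

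Having established this parity-toggling move, the proof is a finite bookkeeping argument: enumerate the pairs $\{A_i, A_j\}$ of distinct principal arcs; for each pair whose current crossing number is even, apply the move once to make it odd. The crucial point is that the move affects the parity of only the one chosen pair, so the moves commute at the level of parities and we can apply them successively to fix every pair. With at most three vertices the graph has few edges, hence few principal arcs, so only finitely many such corrections are needed; when all pairs have odd crossing number the resulting diagram is, by definition, an odd meander diagram of $G$.

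The main obstacle I anticipate is verifying rigorously that the finger move can be routed so as to cross the target arc $A_j$ an \emph{odd} number of times (ideally once) while crossing every other arc an \emph{even} number of times, and while keeping $A_i$ simple and all vertices on the convex hull. This is essentially a planarity argument about the complementary regions of $D$ and the position of endpoints on $\partial(\operatorname{conv} D)$: one must check that the exterior face is connected and adjacent to a neighborhood of an endpoint of $A_i$, and that a path in the exterior from that endpoint, crossing back into the diagram transversally to $A_j$ near $\partial(\operatorname{conv} D)$, exists. For at most three vertices this is easy to see directly by inspecting the cyclic order of the (at most six) endpoints on the convex hull boundary, which is why the lemma restricts to this case; the general statement would require more care, but here a short case analysis — or simply the remark that any arc endpoint on the convex hull has a free outward direction — suffices.
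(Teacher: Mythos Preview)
Your parity-toggling finger move cannot work, and the obstacle is topological rather than a matter of careful routing. For two simple arcs $A,A'$ properly embedded in a disk with endpoints on the boundary circle, the parity of $|A\cap A'|$ equals the $\bmod\ 2$ linking of the pairs $\partial A$ and $\partial A'$ on that circle; it cannot be changed by any modification of $A$ that keeps $A$ simple and fixes its endpoints. Concretely, for your finger to contribute a single crossing with $A'$ it must encircle an endpoint of $A'$ --- but those endpoints are vertices sitting on $\partial(\operatorname{conv} D)$, and any finger wrapping around such a vertex pushes it into the interior of the new convex hull, destroying the meander condition. If the finger avoids all vertices, it meets every other arc an even number of times and no parity changes.

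This obstruction also explains why the hypothesis of at most three vertices carries the entire content of the lemma and is not, as you suggest, a finiteness convenience (a loopless graph on two vertices can already have arbitrarily many edges). With four vertices on the boundary circle one may have principal arcs --- say the edges $v_1v_2$ and $v_3v_4$ in a spatial $K_4$ --- whose endpoint pairs are unlinked in every cyclic arrangement, so no odd meander diagram exists at all. What makes the case of $\le 3$ vertices special is that any two edges then share a vertex; the genuine freedom is the linear order of edge-ends at each vertex, which the relocation procedure in the proof of Theorem~\ref{th:meander-SpGr-4} lets you prescribe. One checks directly that with at most three vertex-clusters on the circle the edge-ends can be ordered so that every pair of endpoint-pairs is linked, and this yields an odd meander diagram.
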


For spatial graphs having odd meander diagrams, we define generalized potholder diagrams as odd meander diagrams of a specific type. The idea of a generalized potholder diagram is given in Fig.~\ref{fig:potholder}. The precise definition is as follows. 

\begin{defn}
We say that an odd meander diagram~$D$ of a spatial graph~$G$ is a \emph{generalized potholder diagram} if there exist a closed Euclidean $2$-disk~$B^2$ with the following properties:
\begin{enumerate}
\item $B^2$ contains all of the crossings of~$D$.
\item $B^2$ contains no vertex of~$D$.
\item Each principal arc of~$D$ is the union of two arcs outside of~$B^2$, several subarcs of~$\partial B^2$, and several parallel chords of~$\partial B^2$. % (which we call \emph{chords} of~$A$).
\item If two chords of~$\partial B^2$ are parts of distinct principal arcs of~$D$, then these chords do intersect. %and their intersection point lies in the interior of~$B^2$.
\item If $A_1$, $A_2$, and $A_3$ are tree distinct principal arcs of~$D$, then $A_3$ does not intersect the convex hull of the intersection $A_1\cap A_2$.
\end{enumerate}
\end{defn}

\begin{center}
\begin{figure}[htp]
\begin{tikzpicture}
\path[use as bounding box] (-4,4) rectangle (4,-4.2);
\node (v1) at (0, 4) {};
\node (v2) at (3.464102, -2) {};

\draw[help lines] (0, 0) circle (3);

\begin{knot} [consider self intersections, end tolerance=3pt, clip width = 5pt]
	\strand[line width = 1] (v1.center) 
		to [out =178, in = 180, looseness = 1.4] (-2.82843, 1);
		
	\strand[line width = 1] (v1.center) 
		to [out =  0, in = 60, looseness = 1] (2.28024, 1.949492);
		
	\strand[line width = 1] (v1.center) 
		to [out = 182, in = 120, looseness = 1] (-1.912033, 2.311738);
		
	\strand[line width = 1] (v2.center) 
		to [out = 60, in =   0, looseness = 1] ( 2.95804, 0.50);
		
	\strand[line width = 1] (v2.center) 
		to [out =-118, in = -120, looseness = 1.4] (-1.046007, -2.811738);
		
	\strand[line width = 1] (v2.center) 
		to [out = -122, in = -60, looseness = 1] (0.5481896, -2.9494922);
	
	\strip{0};
	\strip{120};
	\strip{240};

\flipcrossings{1, 3, 6, 8, 9, 11, 12, 15, 17, 19, 22, 26};
\end{knot}

\draw [rotate around={0:(0,0)}, gray, fill, opacity = 0.2, rounded corners] (-2.95804, 0.4) rectangle ( 2.95804 ,1.1);
\draw [rotate around={120:(0,0)}, gray, fill, opacity = 0.2, rounded corners] (-2.95804, 0.4) rectangle ( 2.95804 ,1.1);
\draw [rotate around={240:(0,0)}, gray, fill, opacity = 0.2, rounded corners] (-2.95804, 0.4) rectangle ( 2.95804 ,1.1);

\draw[fill] (v1) circle (0.075);
\draw[fill] (v2) circle (0.075);
\end{tikzpicture}
\caption{A generalized potholder diagram of a spatial graph}
\label{fig:potholder}
\end{figure}
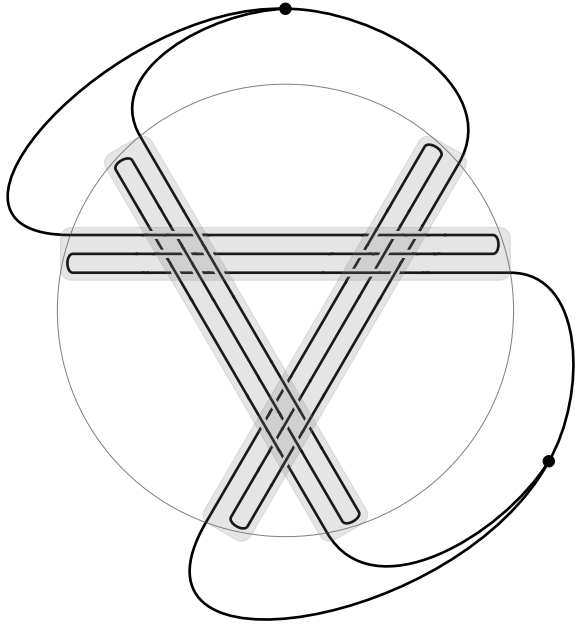
\end{center}

It is convenient to see the structure of a generalized potholder diagram with help of rectangles (``thin'' rectangular strips) as in Fig.~\ref{fig:potholder}.
Indeed, the above definition implies that if~$D$ is a generalized potholder diagram, with the corresponding disc~$B^2$, 
then one can assign a rectangle~$S_A$ to each principal arc~$A$ of~$D$ in such a way that 
(i)~for each principal arc $A$ of~$D$, the intersection $A\cap B^2$ is contained in~$S_A$;
(ii)~no three rectangles corresponding to distinct principal arcs of~$D$ have a common point.

\begin{thm}
\label{thm:potholder}
%Each spatial graph having odd meander diagrams has generalized potholder diagrams.
If a spatial graph has an odd meander diagram, then it has generalized potholder diagrams.
\end{thm}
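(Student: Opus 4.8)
The plan is to start from an odd meander diagram $D$ of the given spatial graph $G$ and to comb it into potholder form. Write $A_1,\dots,A_n$ for the principal arcs of $D$ (if some edge of $G$ is a knotted loop, first split the corresponding exceptional arc at its convex-hull point into two simple arcs and treat these as separate principal arcs; the argument below is unaffected). Let $c_{ij}$ be the number of crossings of $D$ lying on both $A_i$ and $A_j$; by hypothesis every $c_{ij}$ is odd. The bookkeeping point is that the parity of $c_{ij}$ is invariant under Reidemeister moves, hence is preserved throughout the construction; since in a generalized potholder diagram one has $c_{ij}=p_ip_j$, where $p_i$ is the number of parallel chords of $A_i$ inside the disk $B^2$, the target forces every $p_i$ to be odd, and the hypothesis that $D$ is \emph{odd} is exactly what makes these parity requirements simultaneously satisfiable (a pair of arcs with an even number of crossings could never be realized by two odd families of chords).

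I would realize the transition to potholder form by processing the principal arcs one at a time. Since $A_1$ is a simple arc, a homeomorphism of the disk $B^2$ carries it into \emph{comb position}: a family of $p_1$ mutually parallel chords contained in a thin rectangular strip $S_1$, joined by subarcs of $\partial B^2$, with the ends of $A_1$ leaving $B^2$ and running to the vertices of $G$, which remain on the boundary of the convex hull exactly as in $D$. Applying this homeomorphism to all of $D$ combs $A_1$ without disturbing anything else, and, by a suitable choice of how $A_1$ runs outside $B^2$, one arranges $p_1$ to be odd. Having combed and frozen $A_1,\dots,A_{i-1}$, I comb $A_i$ into a fresh thin strip $S_i$ whose slope is transverse to those of the earlier strips and which is thin enough that $S_1,\dots,S_i$ stay in general position --- no point lies in three of them, while any two overlap in a parallelogram large enough that every chord of $A_i$ can be made to cross every chord of $A_j$ for $j<i$ (for three arcs the resulting arrangement is exactly the one in Fig.~\ref{fig:potholder}). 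As $A_i$ is simple, its shadow can be planar-isotoped, rel its endpoints, into this comb position; I take $p_i$ odd (possible because parities are preserved and $D$ is odd) and large enough for this, and I assign the over/under information of every crossing created during the isotopy so that it sits on a nugatory bigon --- that is, every Reidemeister-II move used in the combing is a reducible one, so no clasp is ever created or forced to be undone. Granting this, the passage from $D$ to the resulting diagram $D''$ is a finite sequence of planar isotopies and Reidemeister moves, so $D''$ represents the same spatial graph $G$; the local certifications that each individual step leaves $G$ unchanged are supplied by the tangle-graph machinery of Section~\ref{sec:proof2} (Lemma~\ref{lem:monotone} together with the discussion of monotone $m$-star-tangle-graphs).

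It then remains to read off that $D''$ is a generalized potholder diagram. By construction $D''$ is a meander diagram whose principal arcs, inside $B^2$, are parallel chords grouped into the general-position strips $S_1,\dots,S_n$; chords belonging to distinct principal arcs intersect; no three strips share a point, so the convex hull of $A_i\cap A_j$ lies in $S_i\cap S_j$ and meets no third arc, giving condition~(5); and each $c_{ij}=p_ip_j$ is odd. Hence $D''$ is an odd meander diagram satisfying conditions (1)--(5) of the definition, which is what we wanted.

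The step I expect to be the main obstacle is the claim, used above, that the combing of $A_i$ past the frozen arcs can always be carried out using only planar isotopy and \emph{reducible} Reidemeister-II moves --- equivalently, that the over/under data can be steered through the isotopy so that the spatial graph type never changes. A genuine clasp between $A_i$ and a frozen arc cannot be undone by an R2 move, so one must guarantee that the planar isotopy bringing $A_i$ to comb position never forces the removal of a bigon that was not itself created as nugatory. Making this precise requires choosing the combing isotopy with care --- for instance, sweeping $A_i$ monotonically and postponing all of its crossings with the frozen arcs to the last stage, where they appear in the prescribed grid pattern --- and it is here that the tangle-graph arguments of Section~\ref{sec:proof2}, or alternatively an application of Proposition~\ref{pr:meander-tangles} to the tangle cut out by a disk slightly smaller than $B^2$, do the real work of certifying that $D''$ and $D$ represent the same spatial graph.
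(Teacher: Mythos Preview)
Your high-level strategy coincides with the paper's: fix a disk, assign a thin strip to each principal arc, and process the arcs one at a time, normalizing each into its strip while keeping the already-normalized ones frozen. The parity observation in your first paragraph is also correct and relevant. The gap is precisely where you suspect it is, and it is not a minor one: the ``combing'' of $A_i$ past the frozen arcs cannot be carried out with planar isotopy and reducible Reidemeister~II moves alone.

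Concretely: before you begin to comb $A_i$, it already has $c_{ij}$ crossings with each frozen $A_j$, and those crossings carry fixed over/under data. During any shadow isotopy that brings $A_i$ to comb position, these pre-existing crossings must be transported to the grid $S_i\cap S_j$, and in general they must pass through one another and through crossings among the frozen arcs. These events are Reidemeister~III moves, not~II, and R3 can only be performed when the three involved crossings are compatible; compatibility is not automatic and cannot be arranged by ``assigning over/under at newly created crossings''. Your appeal to Section~\ref{sec:proof2} does not help here: Lemma~\ref{lem:monotone} and the monotone $m$-star discussion certify that a small star around a \emph{vertex} can be relocated while preserving the graph type; they say nothing about sliding an entire arc across a grid of frozen strands. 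Proposition~\ref{pr:meander-tangles} guarantees only that each tangle component can be made simple, not that it can be made simple \emph{in a prescribed comb position inside a prescribed strip}, which is what you need.

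The paper's procedure fills exactly this gap. For each $A_i$ it introduces a bipolar coordinate system on $B^2$ (parallels and meridians with poles at $A_i\cap\partial B^2$), rewrites $A_i\cap B^2$ as an alternation of meridional and longitudinal pieces, ``polarizes'' it so that each meridional piece carries at most one crossing, and then slides the meridional pieces one by one into $S_{A_i}$. The key geometric point is that, because every chord of a frozen $A_j$ meets every meridian of this system transversally in a single point (since $I_{A_i}$ and $I_{A_j}$ intersect), a moving meridional piece can always be pushed past the frozen chords; when it meets a crossing among frozen arcs, the obstruction is resolved by an honest R3 move, possibly after tripling the meridional piece into a local zigzag. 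The frozen arcs may need extra zigzags (Step~4), and the not-yet-processed arcs are kept simple via the Lee--Jin technique of~\cite{LJ01}. None of this machinery appears in your outline, and it is the substance of the proof.

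Finally, you should also say what happens to the arcs $A_k$ with $k>i$ during the combing of $A_i$: they are dragged around and must be kept simple, which is nontrivial and is exactly where~\cite{LJ01} is invoked.
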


% In order to prove Theorem~\ref{thm:potholder},
We describe a procedure of obtaining a generalized potholder diagram
starting with an odd meander diagram.

\subsubsection*{A procedure for obtaining generalized potholder diagrams for spatial graphs.} 
Given an odd meander diagram~$D_0$, we apply a plane isotopy and move $D_0$ to a position where $D_0$ is contained in a Euclidean $2$-disk~$B^2$ such that the intersection $D_0\cap \dd B^2$ is precisely the set of vertices of~$D_0$.
After that, we move the vertices of~$D_0$ outside of~$B^2$ in such a way that
(i)~$B^2$ contains no vertex of~$D_0$;
(ii)~$B^2$ contains all of the crossings of~$D_0$;
(iii)~each principal arc of~$D_0$ intersects~$\dd B^2$ in precisely two points.
Then, for each principal arc~$A$ of~$D_0$, we choose a thin rectangular strip,~$S_A$, such that the interior of~$S_A$ contains the chord~$I_A$ with endpoints at $A\cap\dd B^2$. 
Let~$\mathcal{S}$ be the set of chosen strips for all of the principal arcs of~$D_0$.
Clearly, without loss of generality we can assume that the points of~$D_0\cap\dd B^2$ are in general position in the sense 
that no six points of~$D_0\cap\dd B^2$ are the endpoints of three chords having a common point.
We also assume that the strips in~$\mathcal{S}$ are sufficiently thin such that 
(i)~each strip from~$\mathcal{S}$ intersects $\partial B^2$ in precisely two arcs;
(ii)~no two strips in~$\mathcal{S}$ have a common point outside of the interior of~$B^2$;
(iii)~no three strips in~$\mathcal{S}$ have a common point.
Observe that any two strips in~$\mathcal{S}$ do intersect because~$D_0$ is odd.

We say that a principal arc~$A$ of~$D_0$ is \emph{in a normal position} if the intersection~${A\cap B^2}$ is (i)~contained in the strip~$S_A\in\mathcal{S}$ and (ii)~composed of several subarcs of~$\partial B^2$ and several parallel chords of~$\partial B^2$ (cf.~the requirements specified in the definition of generalized potholder diagrams).
Our further plan is to move the principal arcs of~$D_0$, one by one, to normal positions. 
Let $n$ be the total number of principal arcs of~$D_0$.
We label these arcs arbitrarily with integers $1,\dots,n$ 
and move them into normal positions starting with arcs with smaller labels.
In~what follows, we denote the principal arc with label~$i$ by~$A_i$. 
When moving the next arc~$A_i$ to a normal position, we keep the previous arcs~$A_j$, $j<i$, in normal positions (it may be necessary, however, to increase the number of their ``zigzags'', i.\,e., the number of chords and arcs of~$\dd B^2$ they composed of). 
Also, when moving~$A_i$ to a normal position, it may be necessary to change the arcs $A_k$, $k>i$, that are not in normal position yet. 
While changing these arcs, we can keep them simple with help of the technique described in~\cite{LJ01},
so we pay no attention to them in the subsequent description of an iteration. 
%Also we preserve the simplicity of the arcs $A_k$, $k>i$, waiting for their turn (it is allowed to shift them). 
%The arcs waiting for their turn can be kept simple thanks to the technique described in~\cite{LJ01}; 
%The technique described in~\cite{LJ01} allows us to keep simple the arcs waiting for their turn;
%we pay no attention to these arcs in the subsequent description of an iteration. 
At each iteration, we bring the next principal arc~$A_i$ to a normal position in five steps:
\begin{itemize}
\item[Step 1.] On the $2$-disk~$B^2$, we introduce a \emph{system~$M_i$ of parallels and meridians}, similar to the system of parallels (arcs of latitude) and meridians on a globe projection,
%{\bf [see https://en.wikipedia.org/wiki/Map_projection]}   
with poles located at the points of~$A_i\cap\dd B^2$. 
The meridians of~$M_i$ are the chord~$I_{A_i}$ (with the endpoints at the poles) and all of the circular arcs that lie in~$B^2$ and have both endpoints at the poles~$A_i\cap\dd B^2$.
The parallels of~$M_i$ are the arcs that lie in~$B^2$, have both endpoints at~$\dd B^2$, and are perpendicular to all meridians.
Subarcs of meridians and parallels will be called \emph{meridional} and \emph{longitudinal} arcs, respectively. By a small isotopy, we transform the arc~$\gamma_i:=A_i\cap B^2$ into a piecewise smooth simple arc composed of a finite number of meridional and longitudinal pieces. 
%By a small isotopy, we transform~$\gamma_i$ into a piecewise smooth simple arc composed of a finite number of pieces each of which is contained either in a parallel (\emph{longitudinal} arcs) or in a meridian (\emph{meridional} arcs) of~$M_i$. 
It is assumed that~$\gamma_i$ will preserve the indicated properties at the following steps of transformation (with a temporary loss of simplicity at Step~4) until the final Step~5.

\item[Step 2.] %The aim of Step~2 is to transform~$\gamma_i$ to a \emph{polar position},
The aim of Step~2 is to make a \emph{polarization} of~$\gamma_i$, i.\,e., to transform $\gamma_i$ to a position where each meridional piece of~$\gamma_i$ is almost as long as the corresponding bearing meridian and all of longitudinal pieces of~$\gamma_i$ are located in small neighborhoods of the poles where no other arc of~$D_0$ enters. 
%where there are no other arcs of the diagram.
%{\bf[QQ free of the other arcs of the diagram ?]}. 
At~the first stage of the polarization, we ``split'' meridional pieces of~$\gamma_i$ containing more than one crossing of~$D_0$ into several meridional and (tiny) longitudinal pieces such that, finally, no meridional piece of~$\gamma_i$ contains two crossings of~$D_0$. 
In what follows, we say that a subset of~$B^2$ is \emph{elementary} if this subset contains neither two crossings of~$D_0$, nor two meridional pieces of~$\gamma_i$, nor a crossing of~$D_0$ together with a meridional piece of~$\gamma_i$ without crossings.
At the second stage of the polarization, we apply a small isotopy of~$D_0$ in order to shift~$D_0$ in a position where each meridian of~$M_i$ is elementary.
After that, we can find a finite set $\theta$ of meridians of~$M_i$ such that (i)~the meridians in~$\theta$ contain neither crossings of~$D_0$ nor meridional pieces of~$\gamma_i$ and (ii)~each region bounded by two neighboring meridians of~$\theta$ is elementary.
%After that, we can find a finite set $\theta$ of meridians of~$M_i$ such that the meridians of~$\theta$ contain neither crossings of~$D_0$ nor meridional pieces of~$\gamma_i$ and each region bounded by two neighboring meridians of~$\theta$ contains at most one crossing of~$D_0$, at most one meridional piece of~$\gamma_i$, and never contains a crossing of~$D_0$ together with a meridional piece of~$\gamma_i$ without crossings.
When performing the next stage of the polarization, we use the move that replaces a small subarc of a longitudinal piece of~$\gamma_i$ with a path of the form (meridional piece)-(longitudinal piece)-(meridional piece). We apply these moves simultaneously to families of such small longitudinal subarcs located along the meridians in~$\theta$ and shift all these small longitudinal subarcs towards the same pole of~$M_i$.
In the newly emerged crossings of~$\gamma_i$ with other arcs of~$D_0$, we assume that~$\gamma_i$ goes over these arcs (then the transformed diagram represents the same spatial graph).
Then, we complete the polarization in the regions bounded by neighboring meridians of~$\theta$. 
There are only finite number of region types we have to deal with. The check is elementary. We omit the details.

\item[Step 3.] We push~$\gamma_i$ into the corresponding strip~$S_{A_i}$. 
When performing this transformation, we alternately shift all of the meridional pieces of~$\gamma_i$ into~$S_{A_i}$, one-by-one, starting with those that are closer to~$S_{A_i}$.

Recall that when shifting~$\gamma_i$, we pay no attention to the arcs $A_k$, $k>i$, that wait for their turn.
The arcs waiting for their turn can be adjusted appropriately and kept simple with help of technique described in~\cite{LJ01}.

Observe that no meridional piece of~$\gamma_i$ we are shifting can cling with a principal arc~$A_j$, $j<i$, which is in a normal position, because the chords $I_{A_j}$ and $I_{A_i}$ do intersect whence it follows that each meridian of~$M_i$ intersects each chord contained in~$A_j$ in precisely one point. 
Therefore, the crossings between the arcs in normal positions are the only obstacle that a meridional piece of~$\gamma_i$ can meet when moving towards~$S_{A_i}$.
When the meridional piece of~$\gamma_i$ that we are shifting meets such a crossing, the problem is solved by applying the type III Reidemeister move either directly or after converting the moving meridional piece into a zigzag made of three long meridional pieces and a pair of tiny longitudinal ones.

\item[Step 4.] We bring $\gamma_i$ to a \emph{monotone position} such that no two longitudinal pieces of~$\gamma_i$ intersect the same meridian of~$M_i$ in their internal points.
If we forget for a moment about the other than~$\gamma_i$ arcs of the diagram and about small neighborhoods~$U_1\cup U_2$ of poles, then transforming of $\gamma_i$ to a monotone position is seen as a sequence of transpositions of adjacent meridional pieces. (During this stage of transformation, $\gamma_i$ can have self-intersections in~$U_1\cup U_2$.)
Step~4 does not increase the number of long meridional pieces of~$\gamma_i$. 
On the contrary, each transposition of adjacent meridional pieces of~$\gamma_i$ may require to increase (up to tripling) the number of chords in each of the previously normalized arcs~$\gamma_j$, $j<i$ (these arcs were fixed under Steps 1--3).

\item[Step 5.] We transform $\gamma_i$ from monotone to a normal position in an obvious way. 
%Clearly, we can assume without loss of generality that the resulting curve is still contained in~$S_{A_i}$. [the last requirement is redundant because it is in the definition of normal position now]
\end{itemize}

After bringing all of the arcs of the diagram into normal positions, we can add several ``trivial'' zigzags to each arc, so many as to make their numbers equal for distinct arcs. Besides, the distances between the adjacent chords of each principal arc can be made equal.

We check that after we transform all of the principal arcs $A_1, \dots, A_n$ to normal positions, the resulting diagram satisfies all of the requirements in the definition of generalized potholder diagrams.
Requirements~(1) and~(2) are satisfied because we moved our diagram to the appropriate position at the very beginning of our construction while during the subsequent transformations, all of the changes were made inside~$B^2$, which could not affect the requirements~(1) and~(2).
Requirement~(3) is fulfilled due to the definition of a normal position.
In order to check requirement~(4), we observe that (i)~any two strips in~$\mathcal{S}$ do intersect because~$D_0$ is odd and (ii)~the intersection of any two strips in~$\mathcal{S}$ is a parallelogram contained in the interior of~$B^2$ due to the assumption on the thinness of strips in~$\mathcal{S}$.
Requirement~(5) is fulfilled due to the assumption that no three strips in~$\mathcal{S}$ have a common point.

\section{Proof of Conjecture~(C3)}
\label{sec:proof3}
%In this section, we give a proof of Conjecture~(C3) stating it as Theorem~\ref{th:rational_knot} below.
This section concerns classical knots: no spatial graphs involved.
Hereafter, we~work in the smooth category and use standard terminology of knot theory, which can be found, e.\,g.,~in~\cite{Rol76, Kaw96, BZ03}.
We recall that a diagram~$D$ of a knot~$K$ is said to be \emph{minimal} if no diagram of~$K$ has less number of crossings than that of~$D$.
%A diagram $D$ of a knot $K$ is said to be \emph{minimal} if no diagram of $K$ has number of crossings less than that of $D$.
A simple arc of a knot diagram is called a \emph{bridge} if it has no under-crossings.
A \emph{2-bridge} knot (or \emph{rational} knot; see~\cite{KL04} for details) is a knot that has a diagram with two bridges containing all crossings.

\begin{thm}
\label{th:rational_knot}
Each 2-bridge knot has a minimal diagram that is semi-meander.
\end{thm}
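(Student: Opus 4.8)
The plan is to produce one concrete minimal diagram of the given $2$-bridge knot~$K$ and to read off a decomposition into two simple arcs directly from its structure. I would start from the classical continued-fraction description of rational knots (see~\cite{KL04}): $K$ is the numerator or denominator closure of a rational tangle~$T$ corresponding to a finite continued fraction expansion $[a_1,\dots,a_n]$ of the fraction $p/q$ of~$K$, and one may take all $a_i\ge 1$. Drawing~$T$ in its standard ``Conway staircase'' form produces a connected alternating diagram~$D$ of~$K$ with $a_1+\dots+a_n$ crossings; for an appropriate (reduced) choice of the expansion this~$D$ is reduced alternating, hence minimal by the crossing-number theorem for alternating links (Kauffman, Murasugi, Thistlethwaite), while the unknot is disposed of by its crossingless diagram. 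So the whole problem reduces to checking that this particular minimal diagram~$D$ is semi-meander.

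Here is the structural observation that does this. The diagram~$D$ is assembled from $n$ twist blocks $R_1,\dots,R_n$, where $R_i$ carries $a_i$ crossings, consecutive blocks share a pair of endpoints, and the blocks are arranged monotonically in the plane --- each successive block placed, say, below and to the right of the previous one --- so that distinct blocks lie in disjoint regions. The tangle~$T$ has exactly two strands, and the key point is that each of them runs once through $R_1,R_2,\dots,R_n$ in this order: a strand entering a twist block leaves it on the opposite pair of endpoints, so it can never turn back, and it therefore marches monotonically through the whole staircase from one free pair of endpoints of~$D$ to the other. Two consequences follow. First, each strand is a chain of embedded arcs lying in disjoint, monotonically arranged regions of the plane and joined end to end, and hence is itself a simple (embedded) arc. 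Second, every crossing of~$D$ lies inside some block~$R_i$ and, since both strands pass through~$R_i$ exactly once, is a crossing \emph{between} the two distinct strands --- never a self-crossing of either one. (There are no crossings outside the blocks, nor on the two closure arcs, which may be drawn disjoint from one another in the exterior of a ball containing~$T$.)

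Assembling the conclusion: after closing up~$T$, the knot~$K$ is the union of the two strands of~$T$ and the two closure arcs, and going around~$K$ strands and closure arcs alternate, because each of the four endpoints of~$T$ is a point where one strand meets one closure arc. Hence we may cut~$K$ at two of these four points so that each of the two resulting arcs, $A_1$ and~$A_2$, is the union of one strand of~$T$ with one closure arc; each such union is a simple arc, since it consists of an embedded strand and a closure arc drawn in the exterior of~$T$ meeting only at a common endpoint. By the second consequence above, every crossing of~$D$ is a crossing between $A_1$ and~$A_2$. Therefore $D$ is a semi-meander diagram, and it is minimal, which proves the theorem.

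The step I expect to require the most care is making the Conway-staircase picture and its two-strand structure completely precise: pinning down the parity conditions on the~$a_i$ that make the closure a knot rather than a two-component link and that guarantee~$D$ is reduced alternating, and verifying that the monotone arrangement of the blocks really does force each strand to be drawn without self-crossings (which is what makes the strands \emph{simple} rather than merely unknotted). Once these points are in place, the appeal to the Kauffman--Murasugi--Thistlethwaite theorem for minimality and the final cutting of~$K$ into two simple arcs are routine.
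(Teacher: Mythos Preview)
Your approach is correct and takes a genuinely different route from the paper's. The paper starts from a minimal $4$-plat diagram (four strings), builds a system of pairwise disjoint ``special annuli'' (one per crossing), and then applies a flype along each annulus whose crossing lies off a chosen string~$S$; this relocates every crossing onto~$S$ while keeping all four strings simple, so the complement of the resulting simple arc~$S'$ is automatically simple and the diagram is semi-meander. You instead work directly with the Conway staircase diagram, observe that the two tangle strands are already simple because the twist blocks are monotonically arranged, and cut at two of the four tangle endpoints; no diagram transformation is needed at all, and minimality comes from the Kauffman--Murasugi--Thistlethwaite theorem for reduced alternating diagrams. Your route is conceptually cleaner once the staircase picture is made precise, at the cost of importing the Tait crossing-number theorem as a black box; the paper's flype argument is more hands-on and its minimality input from~\cite{KL04} is closer to the $2$-bridge setting, though ultimately of the same flavor. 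The technical points you flag --- that the staircase diagram with all $a_i\ge 1$ (suitably normalized) is reduced alternating, that each tangle strand is embedded because it traverses the disjoint twist regions monotonically, and that for a one-component closure the strands and closure arcs alternate around the knot --- are all standard and routine to pin down; none of them hides a genuine obstruction.
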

\begin{proof}
We start with auxiliary definitions.
By a \emph{$2m$-plat diagram} (of a knot) we mean a plane knot diagram that is composed of $2m$ \emph{strings}~--- simple arcs each of which is the graph of a smooth function $[0,1]\to[0,1]$. An example of a 4-plat diagram is given in Fig.~\ref{pic:4plat}. It is proved in \cite{KL04} that each 2-bridge knot has a 4-plat diagram that is minimal. Moreover, each 2-bridge knot has a 4-plat minimal diagram with a string free of crossings, but we do not use this fact in our proof to which we now pass.

%Let $K$ be a 2-bridge knot. It was proved in \cite{KL04} that each 2-bridge knot has a minimal 4-plat diagram. Moreover, a 2-bridge knot has a minimal 4-plat diagram one of whose strings is free from crossings, but we do not use this additional fact in our proof. An example of a 4-plat diagram is given in Fig.~\ref{pic:4plat}.
\begin{figure}[h]
\begin{center}
\scalebox{1.2}{
\begin{tikzpicture}[scale = 0.8]
\draw[line width = 1.2, blue] (2, 0)					 
					to (0, 0);								 
\draw[line width = 1.2, blue] (10, 0) to (4, 0);
\draw[line width = 1] (0, 3) to (4, 3);
\draw[line width = 1] (8, 3) to (10, 3);
\draw[line width = 1] (4, 1) to (5, 1);
\draw[line width = 1] (2, 2) to (4, 2);
\draw[line width = 1] (5, 3) to (6, 3);
\draw[line width = 1] (6, 1) to (8, 1);
\block{2}{0}{1}
\blockx{1}{4}{2}
\block{1}{5}{1}
\blockx{2}{6}{2}
\block{2}{8}{1}

\draw [line width = 1, blue] (-0.5, 0.5) to [out = -90, in = 180,  looseness = 1] (0, 0);
\draw [line width = 1, blue] (10.5, 0.5) to [out = -90, in = 0,  looseness = 1] (10, 0);

\draw [line width = 1] (-0.5, 0.5) to [out = 90, in = 180,  looseness = 1] (0, 1);
\draw [line width = 1] (10.5, 0.5) to [out = 90, in = 0,  looseness = 1] (10, 1);

\draw [line width = 1] (-0.5, 2.5) to [out = -90, in = 180,  looseness = 1] (0, 2);
\draw [line width = 1] (10.5, 2.5) to [out = -90, in = 0,  looseness = 1] (10, 2);
\draw [line width = 1] (-0.5, 2.5) to [out = 90, in = 180,  looseness = 1] (0, 3);
\draw [line width = 1] (10.5, 2.5) to [out = 90, in = 0,  looseness = 1] (10, 3);

\draw [line width = 1] (2, 1) to [out = 0, in = 180, looseness = 0.7] (3, 0);
\draw [white, double=blue, double distance = 1, ultra thick] (2, 0) to  [out = 0, in = 180, looseness = 0.7] (3, 1);
\draw [line width = 1, blue] (3, 1) to [out = 0, in = 180, looseness = 0.7] (4, 0);
\draw [white, double=black, double distance = 1, ultra thick] (3, 0) to  [out = 0, in = 180, looseness = 0.7] (4, 1);

\draw [red, thick, opacity = 0.3, rounded corners] (0,-2.2) rectangle (13.0 ,5.2);
\draw [red, thick, opacity = 0.3, rounded corners] (1,-2.0) rectangle (12.8 ,5.0);
\draw [red, thick, opacity = 0.3, rounded corners] (2,-1.8) rectangle (12.6 ,4.8);
\draw [red, thick, opacity = 0.3, rounded corners] (3,-1.6) rectangle (12.4 ,4.6);
\draw [red, thick, opacity = 0.3, rounded corners] (4,-1.4) rectangle (12.2 ,4.4);
\draw [red, thick, opacity = 0.3, rounded corners] (5,-1.2) rectangle (12.0 ,4.2);
\draw [red, thick, opacity = 0.3, rounded corners] (6,-1.0) rectangle (11.8 ,4.0);
\draw [red, thick, opacity = 0.3, rounded corners] (7,-0.8) rectangle (11.6 ,3.8);
\draw [red, thick, opacity = 0.3, rounded corners] (8,-0.6) rectangle (11.4 ,3.6);
\draw [red, thick, opacity = 0.3, rounded corners] (9,-0.4) rectangle (11.2 ,3.4);
\draw [red, thick, opacity = 0.3, rounded corners] (10,-0.2) rectangle (11.0 ,3.2);

\end{tikzpicture}}
\end{center}
\caption{A 4-plat diagram with a chosen string and a system of special annuli}
\label{pic:4plat}
\end{figure}
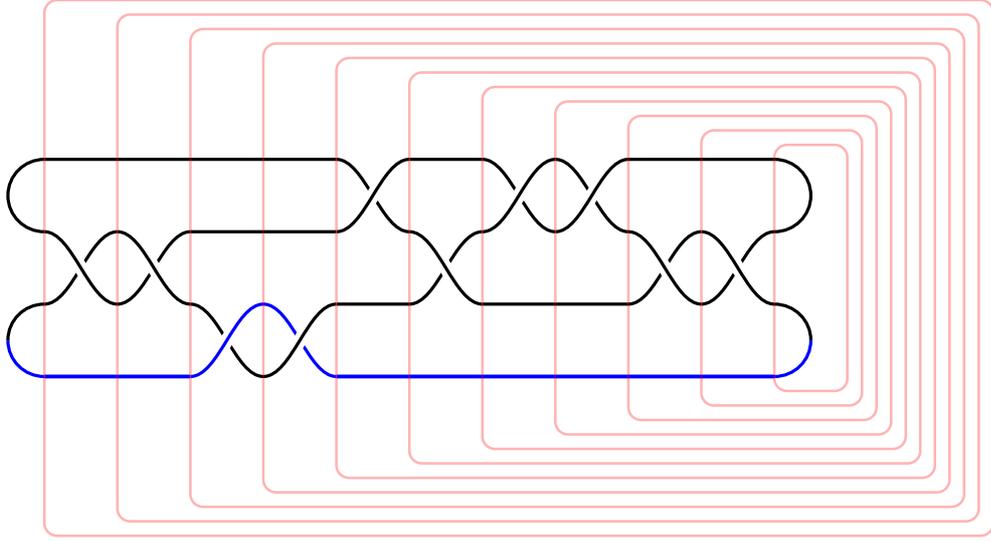
Let $K$ be a 2-bridge knot and let $D\subset\R^2$ be a minimal 4-plat diagram of~$K$.
Without loss of generality we can assume that no two crossings of~$D$ lie on the same vertical line. 
We say that an annulus $A$ in~$\R^2$ is \emph{special} with respect to $D$ if $\partial A$ intersects $D$ transversely, $A$ contains precisely one crossing of~$D$, and for each string~$S$ of~$D$, the intersection of~$A$ with~$S$ is an arc with endpoints on distinct components of~$\partial A$. 
If~$C$ is a simple closed curve in~$\R^2$ intersecting the square  $[0,1]\times[0,1]$ in a vertical segment containing a crossing of~$D$, then a sufficiently small regular neighborhood of~$C$ is obviously a special annulus (see Fig.~\ref{pic:4plat}).
Observe that if $A$ is a special annulus, then the pair $(A,A\cap D)$ looks like those ones shown in Fig.~\ref{pic:flype}.
It is obvious from the picture of Fig.~\ref{pic:4plat} that there exists a system $\mathfrak{A}$ of disjoint special annuli such that each crossing~$x$ of~$D$ is contained in an annulus~$A_x$ of~$\mathfrak{A}$. 
\begin{figure}[h!]
\begin{center}
\begin{tikzpicture}
\draw [line width = 1] (-0.3535, 0.3535) to [out =  150, in = 30, looseness = 1] (-1,-1);
\draw [white, double=black, double distance = 1, ultra thick] (-0.3535,-0.3535) to [out = 210, in = -30, looseness = 1] (-1, 1);

\draw [line width = 1] (0.3535, 0.3535) to [out = 0, in = 180, looseness = 1] (1, 1);
\draw [line width = 1] (0.3535,-0.3535) to [out = 0, in = 180, looseness = 1] (1,-1);

\draw node (R) at (0, 0) {R};
\draw [pink, thick] (0, 0) circle (0.5);
\draw [pink, thick] (0, 0) circle (1.414213562);

%\draw[<->] (2, 0.7) to [out = 30, in = 150, looseness = 1] (3, 0.7);
%\draw (2.5, 1.1) node{\text{flype}};
%\end{tikzpicture}
%\hspace{1cm}
%\raisebox{1.4cm}{
%\scalebox{2}{
%\begin{tikzpicture}
%\draw [white, ultra thick] (0,0) circle [radius=2];;
\draw (3.5, 0.3) node[scale = 1] {\text{flype}};
\draw[<->, thin] (2.7,0)--(4.3, 0);
%\end{tikzpicture}}}
%\hspace{1cm}
%\begin{tikzpicture}
\draw [line width = 1] (7+0.3535,-0.3535) to [out = -30, in = 210, looseness = 1] (7+1, 1);
\draw [white, double=black, double distance = 1, ultra thick] (7+0.3535, 0.3535) to [out =  30, in = 150, looseness = 1] (7+1,-1);

\draw [line width = 1] (7-0.3535,0.3535) to [out = 180, in = 0, looseness = 1] (7-1, 1);
\draw [line width = 1] (7-0.3535,-0.3535) to [out = 180, in = 0, looseness = 1] (7-1,-1);

\draw node (R) at (7, 0) [yscale = -1]{R};
\draw [pink, thick] (7, 0) circle (0.5);
\draw [pink, thick] (7, 0) circle (1.414213562);

\end{tikzpicture}
\end{center}
\caption{Flype}
\label{pic:flype}
\end{figure}

Let $S$ be one of the four strings of~$D$. If $S$ contains all crossings of~$D$, then $D$ is semi-meander.
If the set $X$ of crossings in $D\setminus S$ is non-empty, then in order to obtain a minimal semi-meander diagram for~$K$ we will transform~$D$ using Tait's \emph{flypes}.
A~\emph{flype} is a diagram transformation described by the pictures of Fig.~\ref{pic:flype}.
It is well known and can be easily seen that two knot diagrams related by a flype represent the same knot and have the same number of crossings. 
If $x$ is a crossing in $D\setminus S$ and $A$ is a special annulus containing~$x$, 
then performing a flype corresponding to~$A$ eliminates $x$ replacing it with a crossing on~$S$.
We transform $D$ by $|X|$ consecutive flypes performing a flype corresponding to the annulus~$A_x$ in~$\mathfrak{A}$ for each crossing~$x$ from~$X$.
The resulting diagram~$D'$ may no longer be a 4-plat diagram.
However, the four strings of~$D$ by construction yield four simple arcs of~$D'$.
Therefore, $D'$ is a minimal diagram of~$K$, $S$ transforms to a simple arc~$S'$ of~$D'$, and $S'$ contains all crossings of~$D'$.
This means that~$D'$ is a minimal semi-meander diagram of~$K$, as required.
%Let $D$ be a minimal 4-plat diagram of $K$. We choose an arbitrary string $S$ of our 4-plat. Then there exists a finite system $\mathfrak{S}$ of nested disjoint simple closed curves such that each crossing of $D$ lies in an annulus $A$ whose boundary is a pair of curves from $\mathfrak{S}$, and the pair $(A, A \cap D)$ is isotopic to the pair shown in Fig.~\ref{pic:flype}. Moreover, without loss of generality, we may assume that one of four arcs in $A \cap D$ is a subarc of $S$, while other arcs are not subarc of $S$.
%Without loss of generality, we assume that no two crossings of $D$ lie on the same vertical line. Then, consider vertical lines such that they do not go through crossings and there is exactly 1 crossing between two neighboring lines. Based on this lines we can create a sequence of embedded rectangle, as it is shown in Fig.~\ref{pic:4plat}. It is clear, that each rectangle intersects $D$ precisely in 4 points (it intersects each plat only once). 
%Firstly, let us choose one  
\end{proof}

\subsubsection*{Acknowledgments}
The authors are heartily grateful to M.~Ozawa, R.~Shinjo, G.~Hotz, and A.~Gamkrelidze for valuable information and suggestions.

\end{document}